\numberwithin{equation}{section}
\theoremstyle{plain}
\newtheorem{thm}{Theorem}[section]
\newtheorem{prop}[thm]{Proposition}
\newtheorem{hyp}{Hypothesis}
\theoremstyle{definition}
\newtheorem{defn}[thm]{Definition}
\theoremstyle{remark}
\newtheorem{rem}[thm]{Remark}
\newcommand{\be}{\begin{equation}}
\newcommand{\ee}{\end{equation}}
\newcommand{\bfig}{\begin{figure}}
\newcommand{\efig}{\end{figure}}
\newcommand{\bt}{\begin{table}}
\newcommand{\et}{\end{table}}
\newcommand{\bc}{\begin{center}}
\newcommand{\ec}{\end{center}}
\newcommand{\ba}{\begin{array}}
\newcommand{\ea}{\end{array}}
\newcommand{\bes}{\begin{equation*}}
\newcommand{\ees}{\end{equation*}}
\newcommand{\mt}[1]{\mathrm{#1}}
\newcommand{\wt}[1]{\widetilde{#1}}
\newcommand\numberthis{\addtocounter{equation}{1}\tag{\theequation}}
\def\R{\mathbb{R}}
\def\N{\mathbb{N}}
\def\P{\mathcal{P}}
\def\Pac{\P_{\mathrm{ac},2}}
\def\A{\mathcal{A}}
\def\e{\varepsilon}
\def\grad{\nabla}
\def\div{\nabla\cdot}
\def\d{\,\mathrm{d}}
\def\p{\partial}
\def\ird{\int_{\R^d}}
\def\irdrd{\int_{\R^d\times\R^d}}
\def\:{\colon}
\def\der{\mathrm{d}}
\def\Ep{E_N^p}
\def\E{E_N}
\def\wtE{\wt{E}_N}
\def\bx{\boldsymbol{x}}
\def\by{\boldsymbol{y}}
\DeclareMathOperator{\minp}{min}
\DeclareMathOperator{\argmin}{argmin}
\DeclareMathOperator{\erf}{erf}
\DeclareMathOperator{\graph}{graph}
\begin{document}

%================== TITLE =====================================================
\title{Numerical Study of a Particle Method for Gradient Flows}

\author{J. A. Carrillo}
\address{Department of Mathematics, Imperial College London, South Kensington Campus, London SW7 2AZ, UK.}
\email{carrillo@imperial.ac.uk}
\author{Y. Huang}
\address{School of Mathematics, University of Manchester, Oxford Road, Manchester M13 9PL, UK.}
\email{yanghong.huang@manchester.ac.uk}
\author{F. S. Patacchini}
\address{Department of Mathematics, Imperial College London, South Kensington Campus, London SW7 2AZ, UK.}
\email{f.patacchini13@imperial.ac.uk}
\author{G. Wolansky}
\address{Mathematics Dept., Technion--Israel Institute of Technology, Haifa 32000, Israel.}
\email{gershonw@math.technion.ac.il}

\keywords{Particle method, diffusion, aggregation, gradient flow, discrete gradient flow, JKO scheme.}
\subjclass[2010]{65M12, 35K05}
\date{6 December 2016}

\maketitle

\begin{abstract}
	We study the numerical behaviour of a particle method for gradient flows involving linear and nonlinear diffusion. This method relies on the discretisation of the energy via non-overlapping balls centred at the particles. The resulting scheme preserves the gradient flow structure at the particle level and enables us to obtain a gradient descent formulation after time discretisation. We give several simulations to illustrate the validity of this method, as well as a detailed study of one-dimensional aggregation-diffusion equations.
\end{abstract}

%================== TABLE OF CONTENTS ========================================
%\tableofcontents
%\vspace{0.4cm}

%================== INTRODUCTION ================================================
\section{Introduction}
\label{sec:introduction}

In this work we introduce a new particle method for approximating the solutions to evolution equations of the form
\begin{equation}\label{1.1}
	\begin{cases} \rho_t=\div\big[\rho\nabla\big(H'(\rho(x))+V(x)+
 W\ast\rho(x)\big)\big],  \quad t>0,\, x\in\R^d,\\
	\rho(0,\cdot)=\rho_0(\cdot),
	\end{cases}
\end{equation}
where $\rho(t,\cdot)\ge0$ is the unknown probability measure and $\rho_0$ is a fixed element of $\P_2(\R^d)$, the set of Borel probability measures on $\R^d$ with bounded second moment. Note that we denote by the same symbol a probability measure and its density, whenever the latter exists. The operator $*$ denotes the convolution, $H \: [0,\infty) \to \R$ is the \emph{density of internal energy}, $V\: \R^d \to \R$ is the \emph{confinement potential}, and $W\: \R^d \to \R$ is the \emph{interaction potential}. These equations are ubiquitous in many applications, ranging from granular media and porous medium flows to collective behaviour models in mathematical biology and self-assembly, see \cite{BCCP,Blanchet,CCH2,TBL,Vazquez} and the references therein.

Recent advances in the analysis of the equation~\eqref{1.1} are mainly based on variational schemes using the natural gradient flow structure in the space of probability measures, see e.g. \cite{McCann,JKO,Otto,Villani,AGS,cmcv-03}. We define the \emph{continuum energy functional} $E \: \P_2(\R^d) \to \R\cup\{-\infty,+\infty\}$ by
\bes
	E(\rho) = \begin{cases}
		\displaystyle \ird \big[ H(\rho(x)) + V(x)\rho(x) + \textstyle{\frac 12} W*\rho(x)\rho(x) \big]\d x & \mbox{if $\rho \in \Pac(\R^d)$},\\[0.3cm]
		\displaystyle \ird \big[ V(x) + \textstyle{\frac 12} W*\rho(x) \big]\d \rho(x) &\mbox{$\let\scriptstyle\textstyle\substack{\mbox{if}\, \rho \not \in \Pac(\R^d)\\ \mbox{and}\, H = 0,}$}\\[0.3cm]
		+\infty & \mbox{otherwise},
	\end{cases}
\ees
where $\Pac(\R^d)$ is the subset of $\P_2(\R^d)$ of probability measures which are absolutely continuous with respect to the Lebesgue measure. The functions $H, V$ and $W$ satisfy the following hypotheses.
\begin{hyp} \label{hyp:VW}
	$V$ is a function in $C^1(\R^d)$ and $W$ is a symmetric, locally integrable function in $C^1(\R^d\setminus\{0\})$ with $W(0) = 0$.
\end{hyp}
\begin{hyp} \label{hyp:basic}
	$H$ is a convex function in $C^1((0,\infty)) \cap C^0([0,\infty))$ with superlinear growth at infinity and $H(0) = 0$. Furthermore, $h( \lambda) := \lambda^d H\left(\lambda^{-d}\right)$ is convex and non-increasing on $(0,\infty)$.
\end{hyp}
The assumption $W(0) = 0$ is made without loss of generality. Indeed, if $W(0)$ is finite, then $W$ can be shifted ``up'' or ``down'' to get $W(0) = 0$; if $W$ has a singularity at $0$, then setting $W(0) := 0$ does not affect the physical behaviour of a system governed by the potential $W$. The assumptions that $H(0) = 0$ and $h$ is convex and non-increasing imply that the energy $E$ is displacement convex if, for example, $V=W=0$, see \cite{McCann}, \cite[Section 4]{McCann2} and \cite[Theorem 5.15]{Villani}. Also note that the classical cases $H(\rho) =  \rho\log \rho$ and $H(\rho) =  \frac{\rho^m}{m-1}$ ($m>1$) satisfy all the required assumptions.

The underlying topology on the probability measures in this paper is given by the \emph{quadratic Wasserstein distance} $d_2(\rho,\mu)$,
 which is defined between two measures $\rho$ and $\mu$ in $\P_2(\R^d)$ by
\[
d_2(\rho,\mu) = \inf_{\gamma \in \Pi(\rho,\mu)} \left[ \irdrd |x-y|^2 \d\gamma(x,y)\right]^{\frac12},
\]
where $\Pi(\rho,\mu)$ is the space of probability measures (also called transport plans) on $\mathbb{R}^d\times \mathbb{R}^d$ with first marginal $\rho$ and second marginal $\mu$. Let us fix a final time $T>0$. We say that $\rho\: [0,T] \to \P_2(\R^d)$ is a \emph{continuum gradient flow solution} with initial condition $\rho_0$ if
\be \label{eq:gradient-flow}
	\begin{cases}
		\rho'(t) = - \grad_{\P_{2}(\R^d)} E(\rho(t)),\\
		\rho(0) = \rho_0,
	\end{cases}
\ee
holds in the sense of distributions on $[0,T]\times\R^d$, see \cite[Equation (8.3.8)]{AGS}. The operator $\grad_{\P_{2}(\R^d)}$ is the classical \emph{quadratic Wasserstein gradient} on $\P_{2}(\R^d)$, which takes the explicit form
\bes
	\grad_{\P_{2}(\R^d)} E(\rho) = -\div \left(\rho \grad \dfrac{\delta E}{\delta \rho}\right) \quad \mbox{for all $\rho \in \P_{2}(\R^d)$},
\ees
where $\frac{\delta E}{\delta \rho} = H'(\rho) + V + W\ast\rho$ is the first variation density of $E$ at point $\rho$. As a by-product of the general theory developed for instance in \cite{AGS}, gradient flow solutions to \eqref{eq:gradient-flow} are weak solutions to \eqref{1.1}. Note that if theoretical issues such as the existence and uniqueness of solutions to the continuum gradient flow \eqref{eq:gradient-flow} are of interest, appropriate additional assumptions must be imposed on $H,V,W$ and $\rho_0$, see \cite{JKO,Villani,AGS}.

We propose below to approximate solutions to the continuum gradient flow \eqref{eq:gradient-flow} by finite atomic probability measures represented by finite numbers of particles. The basic idea is to restrict the continuum gradient flow to the discrete setting of atomic measures by performing the steepest descent of a suitable approximation of the energy $E$ defined on finite numbers of Dirac masses, and apply a discrete analogue of the JKO scheme for the Fokker-Planck equation proposed by Jordan, Kinderlehrer and Otto~\cite{JKO}. The theoretical underpinning of this method is studied in the companion paper~\cite{CPSW}, where the convergence of the discrete gradient flow to the continuum one is proved in the framework proposed by Serfaty in~\cite{Serfaty,SS}, in the special case of $V=W=0$ in one dimension for equally-weighted particles and under additional appropriate hypotheses on $H$. The goal of the present paper is to give numerical evidence of such convergence and to motivate possible extensions of the theoretical result in \cite{CPSW} to nonzero confinement and interaction potentials (even with possible singularities), as well as to higher dimensions and unequally-weighted particles.

Let us mention that other numerical methods have been developed to conserve particular properties of solutions of the continuity equation \eqref{1.1}. In \cite{Filbet,CCH2} the authors developed finite-volume methods preserving the decay of energy at the semi-discrete level, along with other important properties like non-negativity and mass conservation. Particle methods for these equations without the diffusion term are known to be convergent under suitable assumptions on the potentials $V$ and $W$ since these results are connected to the question of the mean-field limit and rigorous derivation of the equations from particle trajectories, see \cite{CCH} for instance. In the case of diffusion equations, particle methods based on suitable regularisations of the flux of the continuity equation \eqref{1.1} have been proposed in \cite{Russo,DM,LM,MGallic}; note that an early particle method was derived for collisional kinetic equations in \cite{Russo2}. Our steepest descent method is purely variational and is based on regularising the internal part of the energy $E$ by substituting particles by non-overlapping blobs, as we discuss next. Let us  mention that the numerical approximation of the JKO variational scheme has already been tackled by different methods using pseudo-inverse distributions in one dimension \cite{GT,Blanchet,CG}, diffeomorphisms \cite{CM}, or solving for the optimal map in a JKO step \cite{BCMO}. Our method avoids these computationally intensive procedures by the approximation of the energy in the discrete setting. Finally, note that gradient-flow-based Lagrangian methods for higher-order, drift diffusion and Fokker-Planck equations have recently been proposed in \cite{MO1,MO2,MO3,JMO}.

The paper is structured as follows. In Section \ref{sec:particle-method} we give a summary of the method and the derivation of the discrete gradient flow in a more general setting than that considered in~\cite{CPSW}. Section \ref{sec:jko-scheme} is dedicated to the derivation of the numerical scheme used to approximate the continuum gradient flow \eqref{eq:gradient-flow}, that is an explicit version of the JKO scheme, as well as to a numerical validation study of the scheme via diffusion equations. We stress that, from the scientific computing point of view, this is a preliminary study whose aim is to motivate the presented particle method. Finally, in Section \ref{sec:aggregation-diffusion} we give the numerical results for various one-dimensional aggregation-diffusion equations---we emphasise that this particle method, despite its simplicity, is able to capture the critical mass for the modified one-dimensional Keller-Segel model.

%================== PARTICLE METHOD AND DISCRETE GRADIENT FLOWS ================================================
\section{Particle method and discrete gradient flows}
\label{sec:particle-method}

In this method, the underlying probability measure is characterised by the particles' positions $(x_1,\dots,x_N)\in (\R^d)^N = \R^{Nd}$ and the associated weights (or masses) $w:=(w_1,\cdots,w_N) \in \mathbb{R}^N$, where $N$ is the total number of particles considered. Throughout this paper, the positions $(x_1,\dots,x_N)$ are evolving in time but the weights $w$ are fixed and such that $w_i>0$ and $\sum_{i=1}^N w_i=1$. Also, we denote by $\R_w^{Nd}$ the space of particles with weights $w$, that is, $\bx:= (x_1,\dots,x_N) \in \R_w^{Nd}$ means that each particle $x_i$ is in $\R^d$ and is associated with the weight $w_i$. Notice the boldface font when referring to elements of $\R_w^{Nd}$.

\begin{rem}\label{rem:convention}
As an important convention in the rest of the paper, whenever particles $\bx\in \R_w^{Nd}$ are considered, they are assumed to be distinct, i.e., $x_i \neq x_j$ if $i\neq j$. Moreover, in one dimension, the particles are assumed to be sorted increasingly, i.e., $x_{i+1} >  x_i$ for all $i \in \{1,\dots,N-1\}$.
\end{rem}

%================== DISCRETE GRADIENT FLOW ================================================
\subsection{Discrete gradient flow}

Consider $N$ particles $\bx:=(x_{1},\dots,x_{N}) \in \R_w^{Nd}$ with fixed weights $w$. The most natural representation of the underlying probability measure is the empirical measure
\be \label{eq:empirical}
	\bx \mapsto \mu_N = \sum_{i=1}^N w_{i} \delta_{x_{i}},
\ee
which belongs to the space of \emph{atomic measures}
\bes
	\A_{N,w}(\R^d) := \left\{\mu \in \P_2(\R^d) \mid \exists\, \bx \in \R_w^{Nd}, \mu = \sum_{i = 1}^N w_{i}\delta_{x_{i}} \right\}.
\ees
\begin{defn}[Discrete energy] \label{defn:discrete-energy}
We define the \emph{discrete energy} $\E \: \A_{N,w}(\R^d) \to \R$ by
\be \label{eq:energy-discrete}
	E_N(\mu_N) = \sum_{i=1}^N |B_i| H\left(\frac{w_{i}}{|B_i|}\right) + \sum_{i = 1}^N w_{i} V(x_i) + \dfrac{1}{2} \sum_{i=1}^N \sum_{\substack{j=1\\j\neq i}}^N w_{i}w_{j} W(x_i - x_j),
\ee
where $B_i$ is the open ball of centre $x_i$ and radius $\frac{1}{2}\min_{j\neq i}|x_i - x_j|$, and $|B_i|$ is the volume of $B_i$.
\end{defn}
Note that $\E$ is finite over the whole $\A_{N,w}(\R^d)$ since, by the Hypotheses \ref{hyp:VW} and \ref{hyp:basic}, $H, V$ and $W$ are pointwise finite. The essence of this discrete approximation $E_N$ on $\A_{N,w}(\R^d)$ of the continuum energy $E$ on $\mathcal{P}_2(\mathbb{R}^d)$  lies in the treatment of the internal part $\int_{\mathbb{R}^d} H\big(\rho(x)\big)\d x$ of the energy $E$, which becomes infinity on point masses;  the point mass of each particle is uniformly spread to circumvent this problem. To this end, consider
\be \label{eq:density-on-balls}
	\bx \mapsto \rho_N = \sum_{i = 1}^N w_{i} \frac{\chi_{B_{i}}}{|B_{i}|},
\ee
where $\chi_{B_{i}}$ is the characteristic function of the ball $B_{i}$. Clearly $\rho_N$ is in $\Pac(\R^d)$, and thus the internal part of the energy  is well-defined for $\rho_N$. Note that the representation $\rho_N$ does not involve overlapping of balls, but contains ``gaps" between balls whose sizes are expected to decrease as the number of particles increases. Then, using \eqref{eq:density-on-balls} for the internal part of the energy and \eqref{eq:empirical} for the confinement and interaction parts, $E_N$ defined in \eqref{eq:energy-discrete} is exactly
\bes
	\ird H(\rho_N(x)) \d x + \ird V(x) \d \mu_N(x) + \irdrd W(x-y) \d\mu_N(y) \d\mu_N(x).
\ees
Here the diagonal terms in the interaction potential vanish since $W(0) = 0$ by Hypothesis \ref{hyp:VW}. This choice of non-overlapping particles has the main advantage of reducing the computational cost of the internal part of the discrete energy functional. One can already notice that this approach allows us to treat diffusive effects simultaneously with confinement and interaction ones in a very natural way; this is another advantage of our method, as it becomes clearer throughout this paper.

Since the expression above depends essentially on $\bx \in \R_w^{Nd}$, we can define the discrete energy equivalently as a function of $\bx \in \R_w^{Nd}$
instead of $\mu_N \in \A_{N,w}(\R^d)$:
\be \label{eq:energy-discrete-particles}
	\wt{E}_N(\bx) := \E(\mu_N) \quad \mbox{for all $\mu_N \in \A_{N,w}(\R^d)$ with particles $\bx \in \R_w^{Nd}$}.
\ee

\begin{defn}[Subdifferential in Hilbert spaces] \label{defn:subdifferential}
	Let $X$ be a Hilbert space with inner product $\langle\cdot,\cdot\rangle_X$ and $\phi\: X \to \R$. We define the \emph{subdifferential} $\p \phi\: X \to 2^X$ of $\phi$, for all $x\in X$, by the subset
\bes
	\p \phi(x) = \left\{z\in X \mid \liminf_{y\to x} \frac{\phi(y) - \phi(x) - \langle z,y-x\rangle_X}{|y-x|_X} \geq 0\right\}.
\ees
If $\partial \phi(x)$ is not empty, then $\partial^0 \phi(x)$ is defined to be the unique element in $\partial \phi(x)$ with minimal norm.
\end{defn}

Now, we say that $\bx\: [0,T] \to \R_w^{Nd}$ is a \emph{discrete gradient flow solution} with initial condition $\boldsymbol{x^0} \in \R^{Nd}$ if the differential inclusion
\be \label{eq:discrete-gradient-flow-inclusion0}
		w\cdot \bx'(t) \in - \p \wtE(\bx(t))\qquad  \mbox{for almost all $t \in (0,T]$}
\ee
is satisfied, and $\bx(0)=\boldsymbol{x^0}$. Here $\bx'(t)$ is the velocity of the curve $\bx(t)$ and $w\cdot \bx'$ is the element-wise product $(w_{1}x'_{1},\dots,w_{N}x'_{N})$. This formulation is not  a standard differential inclusion because of the presence of the weights. To cope with this, we introduce the following inner product on $\R_w^{Nd}$.
\begin{defn}[Weighted inner product on $\R_w^{Nd}$] \label{defn:weighted-ip}
	For all $\bx,\by\in\R_w^{Nd}$ we define the \emph{weighted inner product} between $\bx$ and $\by$ as
\bes
	\langle \bx,\by \rangle_w = \sum_{i=1}^N w_{i}\langle x_i,y_i\rangle_{\R^d}.
\ees
\end{defn}
From now on, the Euclidean space $\R_w^{Nd}$ is endowed with this inner product. This definition clearly induces the weighted norm
\be \label{eq:norm}
	|\bx|_w := \sqrt{\langle \bx,\bx \rangle_w} = \sqrt{\textstyle{\sum}_{i=1}^N w_{i}|x_i|_{\R^d}^2} \quad \mbox{for all $\bx\in\R_w^{Nd}$}.
\ee
It also induces a slightly more general subdifferential structure: for any functional $\phi\:\R_w^{Nd} \to \R$,
\bes
	\p_w \phi(\bx) := \left\{\boldsymbol{z}\in\R_w^{Nd} \mid w\cdot \boldsymbol{z} \in \p \phi(\bx)\right\} \quad \mbox{for all $\bx\in\R_w^{Nd}$},
\ees
where again $w\cdot \boldsymbol{z}$ is the element-wise product between $w$ and $\boldsymbol{z}$; and we can then define the element $\partial_w^0 \phi(\bx)$ with minimal norm accordingly. The discrete gradient flow inclusion \eqref{eq:discrete-gradient-flow-inclusion0} can now be rewritten more naturally as
\be \label{eq:discrete-gradient-flow-inclusion}
	\begin{cases}
		\bx'(t) \in - \p_w \wtE(\bx(t)) & \mbox{for almost all $t \in (0,T]$},\\
		\bx(0) = \boldsymbol{x^0}.
	\end{cases}
\ee
This is the discrete gradient flow structure that is used from now on.

\begin{rem}
In~\cite{CPSW} the authors showed that in one dimension with $V=W=0$ the discrete energy $\E$ $\Gamma$-converges (in the $d_2$-topology, see Definition \ref{defn:gamma-convergence}) to the continuum one $E$ if adequate additional assumptions are imposed on $H$. 
\end{rem}

%================== APPROXIMATED DISCRETE GRADIENT FLOW ================================================
\subsection{$p$-approximated discrete gradient flow}
\label{subsec:approximated-gradient-flow}

The gradient flow~\eqref{eq:discrete-gradient-flow-inclusion} is written as a differential inclusion instead of an ordinary differential equation, primarily because the radius $\frac{1}{2}\min_{j\neq i} |x_i-x_j|$ of the balls $B_i$ is not a smooth function of the positions in the definition of the energy $\wtE$. As a result, the energy $\tilde{E}_N$ is not a smooth function of the positions $\bx$ when the radius of the ball $B_i$ is determined by more than one particle.

If $\wtE$ is lower semi-continuous and convex, and $\boldsymbol{x^0}\in  \{\by\in \R_w^{Nd} \mid \p_w\wtE(\by) \neq \emptyset\}$, it is known by~\cite[Theorem 1 of Section 3.2]{Aubin} that the discrete gradient flow~\eqref{eq:discrete-gradient-flow-inclusion} is well-posed and the equation $\bx'(t) = -\p_w^0\wtE(\bx(t))$ is satisfied for almost every $t\in [0,T]$. Note that, in our case, the lower semi-continuity of $\wtE$ is trivial to check by the assumptions on $H$, whereas the convexity is proved in Proposition \ref{prop:convexity-EN} for convex potentials $V$ and $W$ in one dimension. Although the element of minimal norm $\partial_w^0 \wtE(\bx(t))$ can be computed in some situations as in~\cite{CPSW} (for the special case of equally-weighted particles with internal energy only and in one dimension), the detailed procedure
is usually involved and is not convenient for numerical implementation. In theory,
one can approximate $\wtE$ by a differentiable function usually characterised by a parmeter, where the minimal norm element of the subdifferential is recovered as the limit 
of the gradient of the approximation as the parameter goes to infinity. A common choice of such an approximation is
\bes
	Y_N^p(\bx) := \inf_{\by\in\R_w^{Nd}} \left(\wtE(\by) + \textstyle{\frac{p}{2}}|\bx-\by|_w^2 \right),
\ees
depending on the parameter $p>0$. Then $Y_N^p$ is differentiable almost everywhere in $\R_w^{Nd}$, $\grad Y_N^p$ is called the \emph{Yosida approximation} of $\p_w \wtE$, and
\bes
	\grad Y_N^p(\bx) \xrightarrow[p \to \infty]{} \p_w^0 \wtE(\bx) \quad \mbox{for all $\bx \in \R_w^{Nd}$},
\ees
see \cite[Theorem 2 of Section 3.1 and Theorem 4 of Section 3.4]{Aubin}. However, the Yosida approximation requires another optimisation, and is therefore not very well adapted to the computation of $\p_w^0 \wtE(\bx)$. An alternative is to approximate the radius of the balls $B_i$ in the definition of $\wtE$ by a smooth function.

\begin{defn}[$p$-approximation of the minimum function] \label{defn:general-p-min}
For $p > 0$ and integer $s \geq 2$, the function $\minp_p \colon (0,\infty)^s \to (0,\infty)$
is defined as
\bes
\minp_p(x_1,\dots,x_s) =
\begin{cases}
	\left(\dfrac 1s  \displaystyle \sum_{i=1}^s x_i^{-p}\right)^{-\frac1p} &
	\mbox{if $x_1,\dots,x_s \in (0,\infty)$},\\
	0 & \mbox{otherwise.}
\end{cases}
\ees
We call $\minp_p(x_1,\dots,x_s)$ the \emph{$p$-approximation} of the minimum function
$\min (x_1,\cdots,x_s)$.
\end{defn}
The following proposition, whose proof is straightforward and left to the reader, justifies the use of the $p$-approximation of the minimum function.
\begin{prop} \label{prop:min-p}
	Let $s \geq 2$. The following statements hold.
\begin{enumerate}[label=(\arabic*)]
	\item \label{it:cinf} Let $p>0$. Then $\minp_p \in C^\infty((0,\infty)^s)$.\\[-0.3cm]
	\item  \label{it:lim} $\minp_p(x) \to \min(x)$ as $p\to\infty$ for all $x \in (0,\infty)^s$.\\[-0.3cm]
	\item  \label{it:noninc} $p\mapsto \minp_p(x)$ is non-increasing on $(0,\infty)$ for all $x \in (0,\infty)^s$.\\[-0.3cm]
	\item  \label{it:below} Let $p>0$. Then $\minp_p(x) \geq \min(x)$ for all $x \in (0,\infty)^s$, with equality if $x_i=x_j$ for all $i,j$.\\[-0.3cm]
	\item  \label{it:above} Let $p \geq 1$. Then $\minp_p(x) \leq s^{1/p}\min(x)$ for all $x \in (0,\infty)^s$.
\end{enumerate}
\end{prop}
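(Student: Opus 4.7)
The proposition collects elementary facts, and as the authors note the proof is straightforward; my plan is to dispatch parts (1), (2), (4), (5) together via a single two-sided estimate, then handle (3) separately via monotonicity of power means.

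For (1), on $(0,\infty)^s$ each map $x_i\mapsto x_i^{-p}$ is $C^\infty$ and strictly positive, so the mean $\frac{1}{s}\sum_{i=1}^s x_i^{-p}$ lies in $(0,\infty)$, and the outer map $y\mapsto y^{-1/p}$ is $C^\infty$ on $(0,\infty)$. Composition then yields $\minp_p\in C^\infty((0,\infty)^s)$.

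For (2), (4), and (5) I would set $m:=\min(x_1,\dots,x_s)$ and observe the sandwich
\[
m^{-p}\le \sum_{i=1}^s x_i^{-p}\le s\,m^{-p},
\]
where the left-hand bound retains only the largest summand and the right-hand bound uses $x_i^{-p}\le m^{-p}$ for each $i$. Dividing by $s$ and applying the order-reversing map $y\mapsto y^{-1/p}$ gives
\[
m\le \minp_p(x)\le s^{1/p}m.
\]
The lower inequality is exactly (4), with equality whenever all coordinates coincide by direct substitution into the defining formula; the upper inequality is (5). Finally, since $s^{1/p}\to 1$ as $p\to\infty$, the squeeze theorem delivers (2).

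For (3), I would prove that $p\mapsto\minp_p(x)$ is non-increasing either by differentiating $\log\minp_p(x)$ in $p$ and applying Jensen's inequality to the convex function $t\mapsto t\log t$ against the uniform probability measure on $\{x_i^{-p}\}_{i=1}^s$, or, more economically, by invoking the classical power-mean inequality, namely that $r\mapsto\bigl(\frac{1}{s}\sum_{i=1}^s x_i^r\bigr)^{1/r}$ is non-decreasing on $\R\setminus\{0\}$; precomposing with the decreasing map $p\mapsto -p$ then yields the desired monotonicity. This is the only non-routine ingredient, and while the power-mean inequality is textbook material, I expect it to be the main conceptual hurdle in the write-up.
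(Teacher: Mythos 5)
Your proof is correct, and since the paper explicitly leaves this proposition's proof to the reader as ``straightforward,'' your write-up simply supplies the natural intended argument: the two-sided estimate $m^{-p}\le\sum_i x_i^{-p}\le s\,m^{-p}$ handles (2), (4) and (5) at once (in fact giving (5) for all $p>0$, stronger than stated), smoothness of the composition gives (1), and (3) is the power-mean inequality, i.e.\ $\minp_p(x)=M_{-p}(x)$ with $r\mapsto M_r(x)$ non-decreasing, which your Jensen computation with $t\mapsto t\log t$ proves correctly. No gaps; this matches what the authors intended to be checked.
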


Now the discrete energy is further approximated using a smoothed radius as below.
\begin{defn}[$p$-approximated discrete energy] \label{defn:p-discrete-energy}
Let $p>0$. The \emph{$p$-approximated discrete energy}
$\Ep \: \A_{N,w}(\R^d) \to \R$ is defined by
\be \label{eq:general-energy-discrete-p}
	\Ep(\mu_N) = \sum_{i=1}^N |B_i^p| H\left(\frac{w_{i}}{|B_i^p|}\right) + \sum_{i = 1}^N w_{i} V(x_i) + \dfrac{1}{2} \sum_{i=1}^N \sum_{\substack{j=1\\j\neq i}}^N w_{i}w_{j} W(x_i - x_j),
\ee
for all $\mu_N\in\A_{N,w}(\R^d)$ with particles $\bx\in\R_w^{Nd}$. Here $B_i^p$ is the ball with the new radius
\bes
 \textstyle{\frac{1}{2}\minp_p\{|x_i-x_j| \mid j\neq i\}
 = \frac{1}{2}\left(\frac{1}{N-1}\sum_{\substack{j=1 \\ j\neq i}}^N |x_i-x_j|^{-p}\right)^{-\frac1p}}.
\ees
\end{defn}

As in \eqref{eq:energy-discrete-particles}, we can define the $p$-approximated discrete energy on $\R_w^{Nd}$ rather than on $\A_{N,w}(\R^d)$:
\be \label{eq:energy-discrete-particles-p}
	\wtE^p(\bx) := \Ep(\mu_N) \quad \mbox{for all $\mu_N \in \A_{N,w}(\R^d)$ with particles $\bx \in \R_w^{Nd}$}.
\ee
We say that $\bx\:[0,T] \to \R_w^{Nd}$ is a \emph{p-approximated discrete gradient flow solution} with initial condition $\boldsymbol{x^{0}} \in \R_w^{Nd}$ if
\be \label{eq:discrete-gradient-flow-ode}
	\begin{cases}
		\bx'(t) = - \grad_w \wtE^p(\bx(t)) & \mbox{for almost all $t \in (0,T]$},\\
		\bx(0) = \boldsymbol{x^{0}},
	\end{cases}
\ee
where the weighted gradient $\grad_w$ is naturally defined on $\R_w^{Nd}$ by
\bes
	\grad_w := \left(\frac{1}{w_{1}}\frac{\p}{\p x_1},\dots,\frac{1}{w_{N}}\frac{\p}{\p x_N}\right).
\ees

\begin{rem}
	Whenever $d=1$ and $V$ and $W$ are convex, we know by Proposition \ref{prop:convexity-EN} that $\wtE^p$ is convex, and therefore the $p$-approximated discrete gradient flow \eqref{eq:discrete-gradient-flow-ode} is well-posed.
\end{rem}

\begin{rem}
	The interest of the $p$-approximation of the discrete gradient flow only lies in the numerical simplicity of coding a gradient descent on an ODE system such as \eqref{eq:discrete-gradient-flow-ode} rather than on a differential inclusion such as \eqref{eq:discrete-gradient-flow-inclusion}. The $p$-approximation is indeed not needed for the theoretical proof given in \cite{CPSW} of the convergence of the discrete gradient flow to the continuum one, since, as already mentioned, in that specific case the minimal norm element of $\p\wtE$ is explicitly computable.
\end{rem}
\begin{rem}
Approximations of the minimum function other than the one given in Definition \ref{defn:general-p-min} are possible. One example is
	\bes
		\minp_p(x_1,\dots,x_s) =
		\begin{cases}
			-\frac 1p \log\left(\frac 1s \sum_{i=1}^s e^{-px_i}\right) & \mbox{if $x_1,\dots,x_s \in (0,\infty)$},\\
			0 & \mbox{otherwise}.
		\end{cases}
	\ees
\end{rem}
The properties of $\minp_p$ given in Proposition \ref{prop:min-p} are not enough to say that the $p$-approximated discrete gradient flow \eqref{eq:discrete-gradient-flow-ode} converges to the discrete one \eqref{eq:discrete-gradient-flow-inclusion} as $p\to\infty$ in some sense. This still needs to be checked if we want to justify the numerical use of the $p$-approximated gradient flow. In the next section and in Appendix \ref{app:convergence} we prove that this is the case in dimension one since we can there exploit the convexity of the discrete energies.

%================== ONE D ================================================
\subsection{One-dimensional case}
\label{subsec:one-d}

\begin{defn}[Inter-particle distance] \label{defn:interparticles}
For any particles $\bx \in \R_w^N$ we denote the \emph{inter-particle distance} by the positive quantity (eventually $+\infty$ by convention)
\bes
	\Delta x_i := x_i-x_{i-1} \quad \mbox{for $i \in \{1,\dots,N+1\}$},
\ees
with the convention $x_0 = -\infty$ and $x_{N+1} = +\infty$. We also write, for $p>0$,
\bes
	r_i = \min(\Delta x_i,\Delta x_{i+1}) \quad \mbox{and} \quad r_i^p = \minp_p(\Delta x_i,\Delta x_{i+1}) \quad \mbox{for all $i \in \{1,\dots,N\}$}.
\ees
Note that $r_i = |B_i|$ and $r_i^p = |B_i^p|$.
\end{defn}
Since particles are sorted increasingly by convention, the $p$-approximated discrete energy \eqref{eq:general-energy-discrete-p} in dimension one can be defined in the simpler form
\be \label{eq:energy-p-one-d}
	\Ep(\mu_N) = \sum_{i=1}^N r_i^p H\left(\frac{w_i}{r_i^p}\right) + \sum_{i = 1}^N w_i V(x_i) + \dfrac{1}{2} \sum_{i=1}^N \sum_{\substack{j=1\\j\neq i}}^N w_iw_j W(x_i - x_j),
\ee
for all $\mu_N\in\A_{N,w}(\R)$ with particles $\bx\in\R_w^{Nd}$. The equivalent formulation of the energy $\wtE^p$ on the particles is defined in dimension one accordingly, see \eqref{eq:energy-discrete-particles-p}.

In order to check that the $p$-approximated discrete gradient flow defined above indeed approximates the discrete one, we need a few elements of maximal monotone operator theory (see \cite{Attouch,Aubin,Brezis} for a detailed overview of the theory). These notions and results being quite abstract, they are only given in Appendix \ref{app:convergence}. There we show how to use this theory to prove the convergence of the $p$-approximated discrete gradient flow to the discrete one in a precise sense, in the case when $V$ and $W$ are convex; we therefore justify the numerical use of the $p$-approximated gradient flow \eqref{eq:discrete-gradient-flow-ode} (at least in the case when $V$ and $W$ are convex and $d=1$).

\begin{rem}
In this method, one reason why we decide to discretise according to non-overlapping balls, rather than, for example, Voronoi cells (see for instance \cite{Graf} for a detailed account on Voronoi cells), is to allow for simpler computations of the discrete energies, which in turn gives rise to less costly simulations. In fact, this is not very relevant in one dimension since then the implementation of Voronoi cells is actually not more costly than that of non-overlapping balls; the one-dimensional simulations given below, which are run according to the non-overlapping balls described above, should therefore be seen as an initial validation of the method and as test cases which need to be extended to higher dimensions in a future work.
\end{rem}

%================== NUMERICAL SCHEME ===============================================
\section{Numerical scheme and validation}
\label{sec:jko-scheme}

Here a general variational formulation is applied to the discrete setting described above, leading to our numerical scheme giving the particles' positions at discrete time steps (see \cite{AGS,Ambrosio,JKO} for a Fokker-Planck motivation and its generalisation to curves in probability spaces). For the sake of generality the derivation of the scheme is partly done for the discrete gradient flow, rather than the $p$-approximated one. The simulations given later were, however, performed in the $p$-approximated setting only. Note also that, in this section, some indices $N$ and $p$ are dropped for clarity.

%================== SCHEME ===================================================
\subsection{The scheme}
\label{subsec:jko}

Take $(t_n)_{n=0}^{M} \subset [0,T]$ a subdivision of the time interval $[0,T]$ and a time-step size $\Delta t=\Delta_n t$ (eventually adaptive) such that $t_{n} = t_0 + \sum_{i=0}^{n-1}\Delta_i t$ for all $n\in\{1,\dots,M\}$. Suppose that, for some $n \in \{0,\dots,M-1\}$, we know $\mu^n \in \A_{N,w}(\R^d)$, where $\mu^n$ is the empirical measure of an approximated discrete gradient flow solution at time $t_n$. Then we want to get an approximated discrete gradient flow solution at the time $t_{n+1}$, that is $\mu^{n+1}$. To this end we use the JKO scheme, i.e., we choose $\mu^{n+1}$ as
\be \label{eq:general-jko}
	\mu^{n+1} := \underset{\mu \in \A_{N,w}(\R^d)}{\argmin} \left\{ \frac{1}{2\Delta t} d_2^2(\mu^n,\mu) + \E(\mu)\right\},
\ee
where we recall that $d_2$ is the quadratic Wasserstein distance. In the following any object with sub- or superscript $n$ or $n+1$ is associated with the time step $n$ or $n+1$, respectively.

From now on, apart from a two-dimensional test in Section \ref{subsubsec:two-dimensions}, the setting is \emph{one-dimensional}.

\subsubsection{Computation of the Wasserstein distance}
\label{subsubsec:wasserstein}
Let us compute the Wasserstein distance in \eqref{eq:general-jko}. For each $n \in \{0,\dots,M-1\}$ and approximation $\mu^n$ and $\mu \in \A_{N,w}(\R)$, we directly get
\be \label{eq:distance}
	d_2^2(\mu^n,\mu) = \sum_{i = 1}^N w_{i} (x_i^n - x_i)^2,
\ee
where $\bx^n=(x_1^n,\dots,x_N^n)$ and $\bx=(x_1,\dots,x_N)$ are the particles of $\mu^n$ and $\mu$, respectively, which leads to the scheme
\be \label{eq:jko}
	\mu^{n+1} := \underset{\mu \in \A_{N,w}(\R)}{\argmin} \left(\sum_{i = 1}^N w_i \dfrac{(x_i^n - x_i)^2}{2\Delta t} + \E(\mu)\right).
\ee
Clearly the scheme \eqref{eq:jko} on the empirical measures can be equivalently rewritten on the particles:
\be \label{eq:jko-particles}
	w\cdot \bx^{n+1} := \underset{\bx \in \R_w^N}{\argmin} \left(\sum_{i = 1}^N w_i \dfrac{(x_i^n - x_{i})^2}{2\Delta t} + \wtE(\bx)\right),
\ee
where $w\cdot\bx^{n+1}$ is the element-wise multiplication between the vectors $w$ and $\bx^{n+1}$. The minimisation problem \eqref{eq:jko-particles} is characterised by
\be \label{eq:discrete-min-chara}
 0 \in \p_w\left( \sum_{i = 1}^N w_i \dfrac{(x_i^n - x^{n+1}_{i})^2}{2\Delta t} + \wtE(\bx^{n+1})\right).
\ee
When $\wtE$ is convex, \eqref{eq:discrete-min-chara} is precisely the backward Euler scheme of the differential inclusion~\eqref{eq:discrete-gradient-flow-inclusion}.

\subsubsection{Minimisation and final form of the scheme}
\label{subsubsec:minimisation}

Let $p > 0$. We now return to the $p$-approximated setting, i.e., we consider \eqref{eq:jko-particles} (and equivalently \eqref{eq:jko}) with $\wtE^p$ instead of $\wtE$ (and equivalently $\E^p$ instead of $\E$). We want to minimise, over the whole set $\R_w^N$, the functional in the $\argmin$ operator in \eqref{eq:jko-particles} (and \eqref{eq:jko}) to find our approximation $\bx^{n+1}$ (and $\mu^{n+1}$) at time step $n+1$. First note that, for each $n \in \{0,\dots,M-1\}$, we can write the discrete energy $\wtE^p$ as
\bes
	\wtE^p(\bx^{n+1}) = \sum_{i = 1}^N w_i E_i^{n+1},
\ees
where
\bes
	E_i^{n+1} := r_i^{n+1} H\left(\dfrac{w_i}{r_i^{n+1}}\right) + w_i V(x_i^{n+1}) + \dfrac{1}{2} \displaystyle \sum_{\substack{j=1\\j\neq i}}^N w_j W(x_i^{n+1} - x_j^{n+1}),
\ees
with $r_i^{n+1} := r_i^{p,n+1}$, see Definition \ref{defn:interparticles}. For any $n \in \{0,\dots,M-1\}$, \eqref{eq:discrete-min-chara} becomes
\be \label{eq:jko-final-implicit}
	x_i^{n+1} = x_i^n - \dfrac{\Delta t}{w_i} \sum_{j = 1}^N w_j\frac{\p E_j^{n+1}}{\p x_i^{n+1}},
\ee
where the uninspiring computation of the derivatives in the sum terms is left to the reader. The scheme \eqref{eq:jko-final-implicit} is the implicit Euler scheme of the ODE \eqref{eq:discrete-gradient-flow-ode}, or equivalently, of the ODE
\bes
 w_i \dfrac{\der x_i}{\der t}  = - \frac{\p \wtE^p}{\p x_i}(\bx), \quad i=1,2,\dots,N,
\ees
and it coincides with the JKO scheme for $\wtE^p$, as in \eqref{eq:general-jko}. Since the implicit scheme~\eqref{eq:jko-final-implicit} is difficult and costly to solve, its explicit Euler version was used in the numerical examples of this paper:
\be \label{eq:jko-final-explicit}
	x_i^{n+1} = x_i^n - \dfrac{\Delta t}{w_i} \sum_{j = 1}^N w_j\frac{\p E_j^{n}}{\p x_i^{n}}.
\ee
We remind the reader that the convergence of the implicit Euler version of the scheme is presented in \cite{JKO} in one dimension. The stability analysis of the explicit scheme \eqref{eq:jko-final-explicit} is not dealt with as it is not the purpose of the present paper---we are not worried about time stepping stability issues for large number of particles in this preliminary stage of validation of our new approach. We can nevertheless indicate that, intuitively, a time step satisfying $\Delta t \leq \frac{C}{N^2}$ for some constant $C>0$ should suffice to ensure stability; indeed, this would be in line with the classical CFL condition for diffusion equations where the mesh size is of order $\frac{1}{N}$. Note that this condition was respected in the simulations presented below. Let us also mention that higher-order time discretisations could be used in place of the explicit Euler scheme, and would indeed lead to a better time and space accuracy of the method.

\begin{rem} 
It is worth pointing out that our particle method does not aim at being competitive against classical finite-volume and finite-difference schemes for purely diffusive equations. In fact, it aims primarily at being simple to implement---even in higher dimensions---and flexible when considering additional terms to diffusion such as confinement and interaction. In terms of complexity at each time step, the method is of order $N^2$ regardless of the space dimension, as it actually is for other finite-volume methods \cite{CCH2}. Note that, however, if we time-discretise \eqref{eq:discrete-gradient-flow-inclusion} rather than its regularised form \eqref{eq:discrete-gradient-flow-ode}, the complexity becomes significantly higher in more than one dimension (at most of order $N^3$) since then we are required to find the closest neighbour to each particle; in dimension one the order of complexity is still $N^2$ thanks to the increasing ordering of the particles.
\end{rem}

%================== INITIALISATION ==========================================
\subsection{Initialisation of the scheme}
\label{subsec:initial-condition}

Below we give two different ways of approximating the initial profile. Let us first introduce the notion of pseudo-inverse.

\begin{defn}[Pseudo-inverse] \label{defn:generalised-inverse}
	Let $F\: \R \to [0,1]$ be a non-decreasing and right-continuous function. The \emph{pseudo-inverse} $\Phi \: [0,1] \to \R \cup \{-\infty,+\infty\}$ of $F$ is the non-decreasing and right-continuous function defined by
\bes
	\Phi(\e) = \inf \{x \in \R\mid F(x) > \e\} \quad \mbox{for all $\e \in [0,1]$}.
\ees
\end{defn}

\subsubsection{Initially equally-weighted particles}
\label{subsubsec:initially-equally-weighted-particles}

If we want to approximate the initial profile $\rho_0 \in \P_2(\R)$ with equally-weighted particles, we need to start with unequally-spaced particles. Consider the pseudo-inverse $\Phi_0$ of the cumulative distribution $x \mapsto \rho_0((-\infty,x])$ of the initial profile. Suppose $w_i = 1/N$ for all $i\in\{1,\dots,N\}$. Then choose the initial particles $\boldsymbol{x^0} := (x_1^0,\dots,x_N^0) \in \R_w^N$ as
\bes
	x_i^0 = \Phi_0\left(\dfrac{2i-1}{2N}\right) \quad \mbox{for all $1 \leq i \leq N$}.
\ees

\subsubsection{Initially equally-spaced particles}
\label{subsubsec:initially-equally-spaced-particles}

If now we want to approximate the initial profile $\rho_0 \in \P_2(\R)$ with equally-spaced particles, we need to assign a different weight to each particle. Consider $F_0$, the cumulative distribution of the initial profile, and $\boldsymbol{x^0} \in \R_w^N$, some chosen equally-spaced initial particles. Let us define
\bes
	\textstyle{x_{i\pm\frac12}^0 = \frac12\left(x_{i}^0 + x_{i\pm1}^0\right)} \quad \mbox{for all $1\leq i \leq N$}.
\ees
Then we choose the weights $w$ as
\be \label{eq:weights}
\begin{cases}
	w_1 = F_0\left(x_{1+\frac12}^0\right),\\[0.2cm]
	w_i = F_0\left(x_{i+\frac12}^0\right) - F_0\left(x_{i-\frac12}^0\right) \quad \mbox{for all $1 < i < N$},\\[0.2cm]
	w_N = 1 - F_0\left(x_{N-\frac12}^0\right).
\end{cases}
\ee
The weight $w_i$ as given in \eqref{eq:weights} for each $1 < i < N$ is the mass that $\rho_0$ ``assigns" to the interval $[x_{i-\frac12},x_{i+\frac12}]$, that is, to the Voronoi cell of $x_i$. Note that the choice we have on the initial positions of the particles $\boldsymbol{x^0}$ is not completely free of constraints: they must be chosen inside the support of $\rho_0$ or the resulting weights are zero. Finally, since the initial particles are chosen to be equally spaced, only the first and last particles' locations are needed to determine the locations of all the others; for the following numerical simulations we give this information under the form $I_\mt{init}=[x_1,x_N]$.

\begin{rem} \label{rem:advantages}
	There are two main advantages in the second initialisation approach. The first one is that it allows a better approximation of the initial profile when there is a strong variation in the density; indeed one has the freedom to place initial particles in less populated regions. The second one is that there are no gaps between the initial balls of centres $x_i^0$ and diameters $\minp_p(\Delta x_i^0,\Delta x_{i+1}^0) = \min(\Delta x_i^0,\Delta x_{i+1}^0)$, see Proposition \ref{prop:min-p}\ref{it:below}, allowing again a better approximation of the initial profile.
\end{rem}

%================== COMPUTATION ERROR ===============================================
\subsection{Computation of the error}
\label{subsec:computation-error}

The natural error to compute for our scheme \eqref{eq:jko-final-explicit} is the quadratic Wassertein error.

\subsubsection{Error with respect to an exact solution}
If we want to get the error between the numerical and exact solutions we proceed as follows. Let $\rho \in \P_2(\R)$ be a continuum gradient flow solution at the final time $T$. Also write $\bx\in\R_w^{N}$, the approximation of the $p$-approximated discrete gradient flow solution obtained with the JKO scheme \eqref{eq:jko-final-explicit} at the final time step $M$ with associated empirical measure $\mu := \mu^{M}$. Then we define the \emph{quadratic Wasserstein error} by
\bes
	e_{d_2} = d_2(\rho,\mu).
\ees
To compute this, one can use the one-dimensional pseudo-inverse definition of the quadratic Wasserstein distance. Let us write $F$ and $G$ the cumulative distributions of $\rho$ and $\mu$, respectively, and $\Phi$ and $\Psi$ their respective pseudo-inverses. Then
\bes
	e_{d_2} = \left(\int_0^1 \left(\Phi(\e) - \Psi(\e)\right)^2 \d\e\right)^{\frac12}.
\ees
We have, for all $i\in\{1,\dots,N\}$,
\[
	\Psi(\e) = x_i \quad \mbox{if }\ \e \in [\Omega_{i-1},\Omega_i),
\]
with $\Omega_i := \sum_{j=1}^{i} w_j$ and the convention $\Omega_0 = 0$. Therefore
\be \label{eq:error-wasserstein-1d-2}
	e_{d_2} = \left(\sum_{i=1}^N \int_{\Omega_{i-1}}^{\Omega_i} \left(x_i - \Phi(\e)\right)^2 \d\e\right)^{\frac12}.
\ee
The error can thus be easily determined if the inverse of the cumulative distribution $\Phi$ of the exact solution at the final time $T$ is known.

\subsubsection{Error with respect to a discrete steady state}
If we know that the considered gradient flow has a steady state and we are interested in the stabilisation behaviour of the scheme \eqref{eq:jko-final-explicit} we proceed as follows. Let $\boldsymbol{x^*}:= (x_1^*,\dots,x_N^*) \in \R_w^N$ be a discrete steady state (obtained by running a simulation for a ``large" time) with associated empirical measure $\mu^*$, and let $\bx\in\R_w^{N}$ be the approximation of the $p$-approximated discrete gradient flow solution obtained with the JKO scheme \eqref{eq:jko-final-explicit} at some final time step $M$ with empirical measure $\mu := \mu^{M}$. Then we define the error by
\be \label{eq:error-discrete}
	e_{d_2}^* = d_2(\mu^*,\mu) = \left(\sum_{i=1}^N w_i(x_i^* - x_i)^2\right)^{\frac12},
\ee
which is nothing but the \emph{Euclidean error} on the particle space $\R_w^N$, see \eqref{eq:norm}.

%================== NUMERICAL RESULTS ====================================
\subsection{Numerical validation: Diffusions}
\label{sec:results}

Before giving any simulation results, let us summarise practical implementation aspects of the scheme.

\begin{itemize}[leftmargin=*]
	\item As already justified in Section \ref{subsec:jko}, all the following simulations were obtained by implementing the explicit scheme \eqref{eq:jko-final-explicit}.
	\item The parameter $p$ needed for the iteration of such scheme was chosen to be 10 for every simulation. This choice is only justified by the fact that it gave sensible results. Note that taking $p$ ``large'' is a bad idea. Indeed, if $p$ increases, then the $p$-approximated discrete gradient flow ``approaches'' the discrete one, for which the energy $\wtE$ is not differentiable whenever two neighbouring inter-particle distances are equal. This may cause numerical instabilities as we would expect the inter-particle distances at the centre of a symmetric profile (like a Gaussian) to be indeed equal; this is what we observed for large $p$ under the form of particle oscillations.
	\item All one-dimensional simulations were initialised in the way described in Section \ref{subsubsec:initially-equally-spaced-particles}, up to a slight modification for end particles in Section \ref{subsec:compact}. In each case, the initial continuum profile and the interval $I_\mt{init}$ are explicitly given.
	\item The choice of the time-step size is explicitly given for each simulation.
	\item All the solution profiles given in the figures for the one-dimensional simulations were drawn by linking linearly the centres of every constant piece of the function $\rho_N$ defined in \eqref{eq:density-on-balls}. Note that this is only one way of representating the discrete solution and other choices are also possible.
\end{itemize}

In this section we validate our scheme \eqref{eq:jko-final-explicit} by showing test simulations run on diffusion equations: the heat and porous medium equations.

%================== THE HEAT EQUATION IN 1D ============================
\subsubsection{The heat equation}
\label{subsec:heat-equation-1d}

The (linear) heat equation $\rho_t =\Delta \rho$ is a continuum gradient flow~\eqref{eq:gradient-flow} for $H(\rho)=\rho\log \rho$ and $V = W = 0$. The evolution of the solution starting from the initial data
\be\label{eq:rho0heat}
	\rho_0^\mt{heat}(x) = \frac{1}{\sqrt{4\pi t_0}}e^{-\frac{x^2}{4t_0}} \quad \mbox{with $t_0=0.25$}
\ee
is shown in Figure \ref{fig:heat-evo}. The exact solution is given, for all $t>0$, by
\bes
\rho(t,x) = \dfrac{1}{\sqrt{4\pi (t+t_0)}} e^{-\frac{x^2}{4(t+t_0)}} \quad \mbox{for all $x \in \R$},
\ees
whose cumulative distribution $F$ and its pseudo-inverse $\Phi$ are
\bes
\begin{cases}
F(t,x) = \dfrac{1}{2} \left(1 + \erf\left(\dfrac{x}{\sqrt{4(t+t_0)}}\right)\right) & \mbox{for all $x \in \R$},\\[0.4cm]
\Phi(t,\e) = \sqrt{4(t+t_0)} \erf^{-1}(2\e - 1) & \mbox{for all $\e \in [0,1)$},
\end{cases}
\ees
where $\erf$ is the error function. The error at the final time $T$ can then be found using \eqref{eq:error-wasserstein-1d-2} and some quadrature form to approximate the integrals therein, see Figure \ref{fig:heat-error}. In Figure \ref{fig:heat-evo} we chose $I_\mt{init}=[-2.5,2.5]$, whereas in Figure \ref{fig:heat-error} we chose $I_\mt{init} = [-4,4]$, see Section \ref{subsubsec:initially-equally-spaced-particles}.

\bfig[!ht]
\centering
	\subcaptionbox{Evolution for $N=50$ with $\Delta t = 10^{-5}$.\label{fig:heat-evo}}{\includegraphics[scale=0.41]{./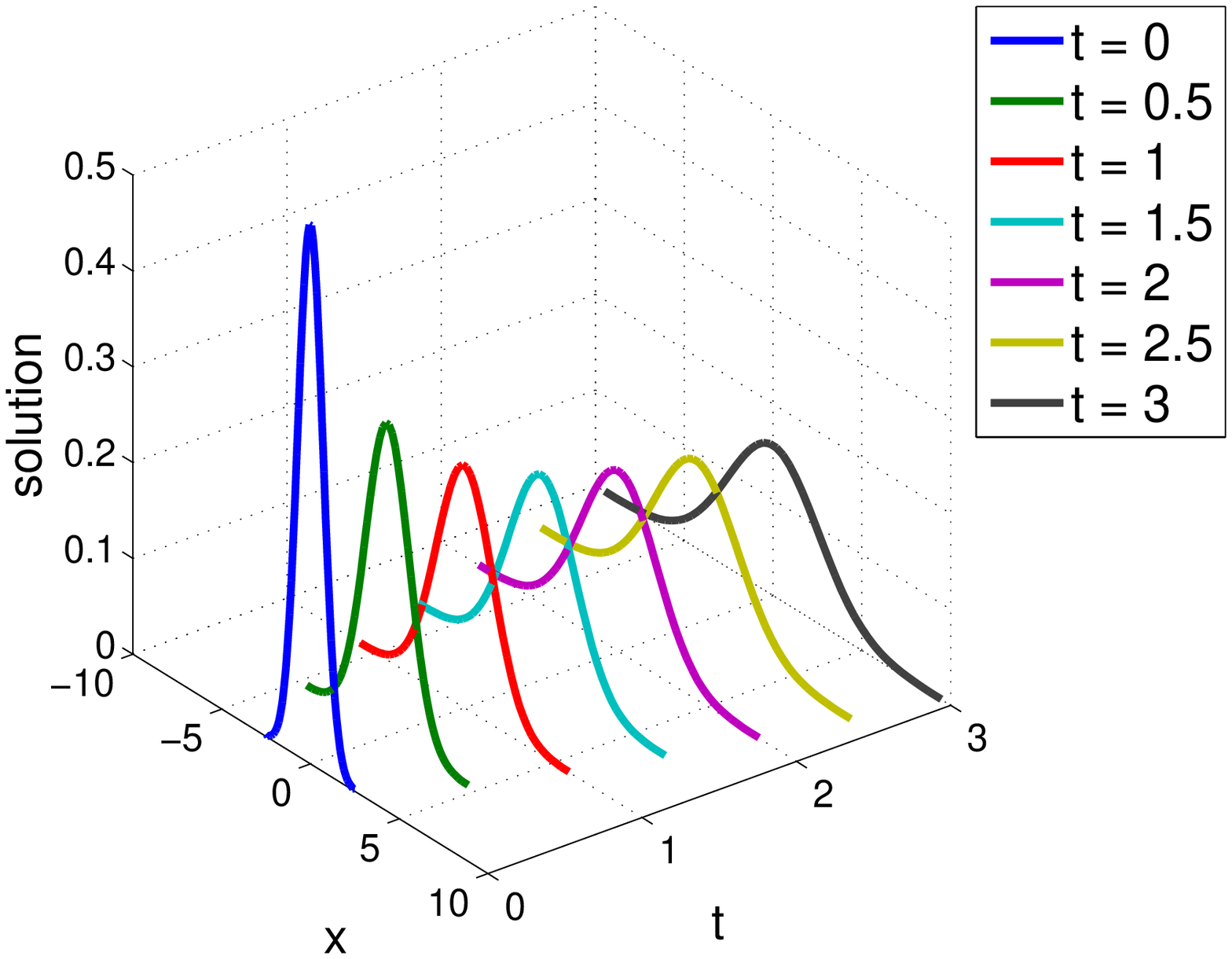}}
	\subcaptionbox{Error with $N$ at $T=3$ with $\Delta t = 5\cdot 10^{-7}$ .\label{fig:heat-error}}{\includegraphics[scale=0.41]{./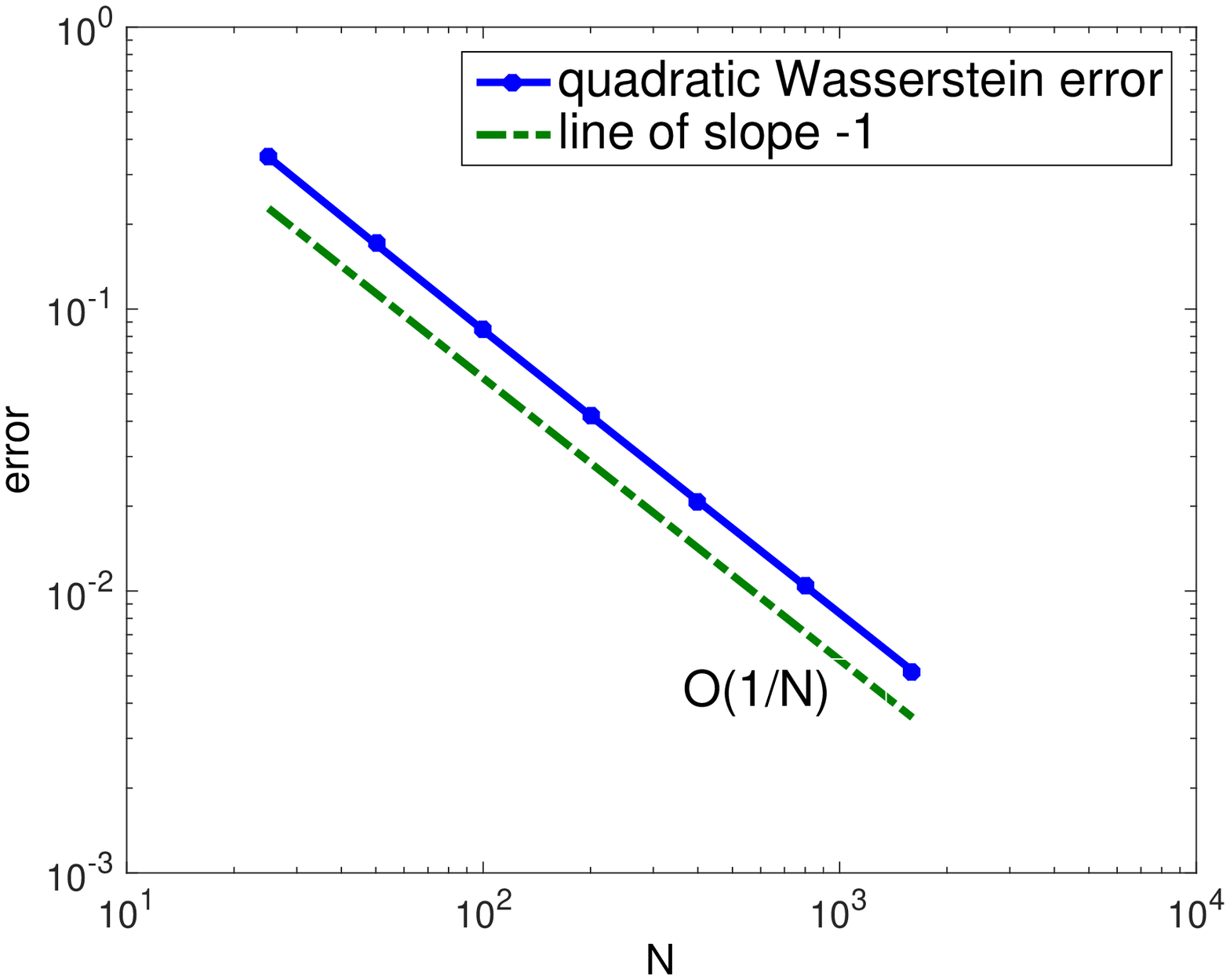}}
	\caption{The heat equation.\label{fig:heat}}
\efig

From Figure \ref{fig:heat-error} we can see that the quadratic Wasserstein error with respect to the exact solution $\rho$ evolves linearly with the number of particles on a log-log plot. From this plot, it looks fair to say that the error of our scheme \eqref{eq:jko-final-explicit} is $e_{d_2} = \mathcal{O}\left(\frac1N\right)$.

%================== THE POROUS MEDIUM IN 1D ===========================
\subsubsection{The porous medium equation}
\label{subsec:porous-medium-equation-1d}

The porous medium equation $\rho_t=\Delta \rho^m$ is a continuum gradient flow \eqref{eq:gradient-flow} for $H(\rho) = \frac{\rho^m}{m-1}, m > 1$, and $V = W = 0$. The evolution of the solution starting from the initial data
\be\label{eq:rho0porous}
	\rho_0^\mt{por}(x) = \frac{1}{t_0^\alpha} \psi\left(\frac{x}{t_0^\alpha}\right) \quad \mbox{with $t_0 = 0.25$}
\ee
is shown in Figure \ref{fig:porous-evo}. Here $\alpha = \frac{1}{m+1}$, $\psi \colon \xi \mapsto (K - \kappa\xi^2)_+^{1/(m-1)}$, where the subscript $_+$ stands for the positive part, $\kappa = \frac{m-1}{2m(m+1)}$ and $K$ determined by the conservation of mass. Indeed, since the total conserved mass is one, then the constant $K$ can be expressed as
\bes
	\textstyle{K = \left[\Gamma\left(\frac{1}{m-1} + \frac{3}{2}\right)\sqrt{\kappa}\bigg/\left(\Gamma\left(\frac{m}{m-1}\right)\Gamma\left(\frac{1}{2}\right)\right)\right]^{\frac{2(m-1)}{m+1}}},
\ees
where $\Gamma$ is the Gamma-function. Then one can verify that, for all $t>0$,
\bes
\rho(t,x) = \dfrac{1}{(t+t_0)^\alpha} \psi\left(\dfrac{x}{(t+t_0)^\alpha}\right) \quad \mbox{for all $x \in \R$},
\ees
is a solution, see \cite[Section 4.4]{Vazquez}. For $x \in \left(0,(t+t_0)^\alpha\sqrt{\frac K\kappa}\right]$, the cumulative distribution is
\bes
	F(t,x) = \frac{1}{2} + \int_0^x \frac{1}{(t+t_0)^\alpha}\left(K - \frac{\kappa x^2 }{(t+t_0)^{2\alpha}}\right)_+^{\frac{1}{m-1}} \d x = \frac{1}{2} + \frac{1}{2}I\left( \frac{\kappa}{K}\frac{x^2}{(t+t_0)^{2\alpha}}; \frac{1}{2},\frac{m}{m-1}\right),
\ees
where $I(x;a,b) := \frac{B(x;a,b)}{B(1;a,b)}$ and $B$ is the incomplete Beta-function, that is,
\bes
	B(x;a,b) = \int_0^x z^{a-1}(1-z)^{b-1} \d z \quad \mbox{for all $x \geq 0$ and $a,b > 0$}.
\ees
Similarly, for $x \in \left(-(t+t_0)^\alpha\sqrt{\frac K\kappa},0\right]$,
\bes
	F(t,x) = \frac{1}{2} - \frac{1}{2}I\left( \frac{\kappa}{K}\frac{x^2}{(t+t_0)^{2\alpha}}; \frac{1}{2},\frac{m}{m-1}\right).
\ees
Also, noticing that
\bes
	\textstyle{K^{\frac{1}{m-1} + \frac{1}{2}}\kappa^{-\frac{1}{2}} = \left(B\left(1;\frac{1}{2},\frac{m}{m-1}\right)\right)^{-1}},
\ees
we get that the pseudo-inverse of $F$ is
\bes
	\Phi(t,\e) =
	\begin{cases}
		-(t+t_0)^\alpha \sqrt{\dfrac{K}{\kappa}}\sqrt{I^{-1}\left(1 - 2\e; \dfrac{1}{2},\dfrac{m}{m-1}\right)} & \mbox{if $0 \leq \e < \frac 12$}, \\[0.4cm]
 		(t+t_0)^\alpha\sqrt{\dfrac{K}{\kappa}}\sqrt{I^{-1}\left(2\e - 1; \dfrac{1}{2},\dfrac{m}{m-1}\right)} & \mbox{if $\frac 12 \leq \e < 1$}.
\end{cases}
\ees
The error can then be found using \eqref{eq:error-wasserstein-1d-2} and approximating the integrals therein, see Figure \ref{fig:porous-error}. In Figure \ref{fig:porous} we chose $I_\mt{init} = \left[-k_0,k_0\right]$, where $k_0:=t_0^\alpha\sqrt{K/\kappa}$, see Section \ref{subsubsec:initially-equally-spaced-particles}.

\bfig[!ht]
\centering
	\subcaptionbox{Evolution for $N=50$ with $\Delta t = \frac{0.1}{N^2}$.\label{fig:porous-evo}}{\includegraphics[scale=0.41]{./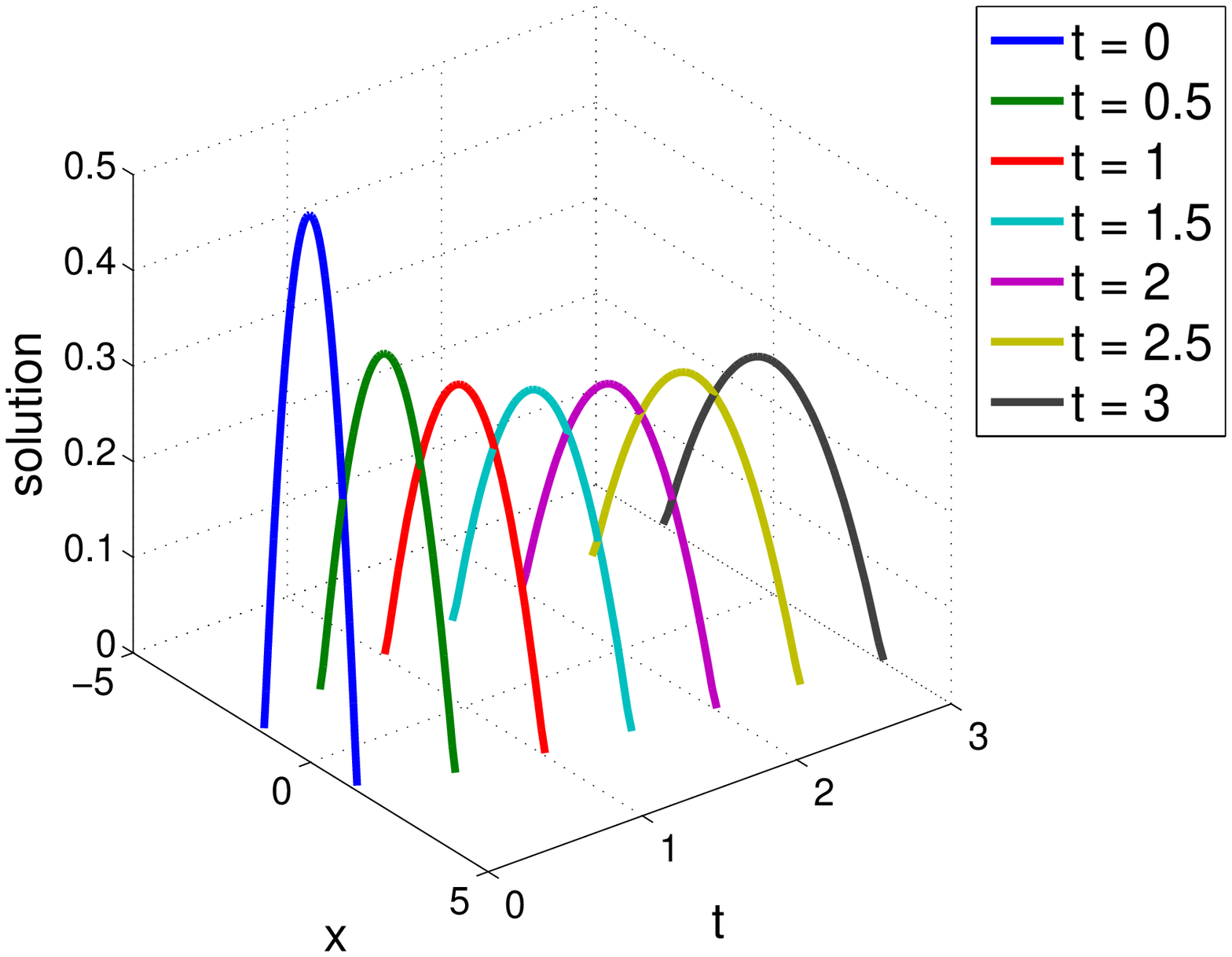}}
	\subcaptionbox{Error with $N$ at $T=3$ with $\Delta t = 10^{-7}$ .\label{fig:porous-error}}{\includegraphics[scale=0.41]{./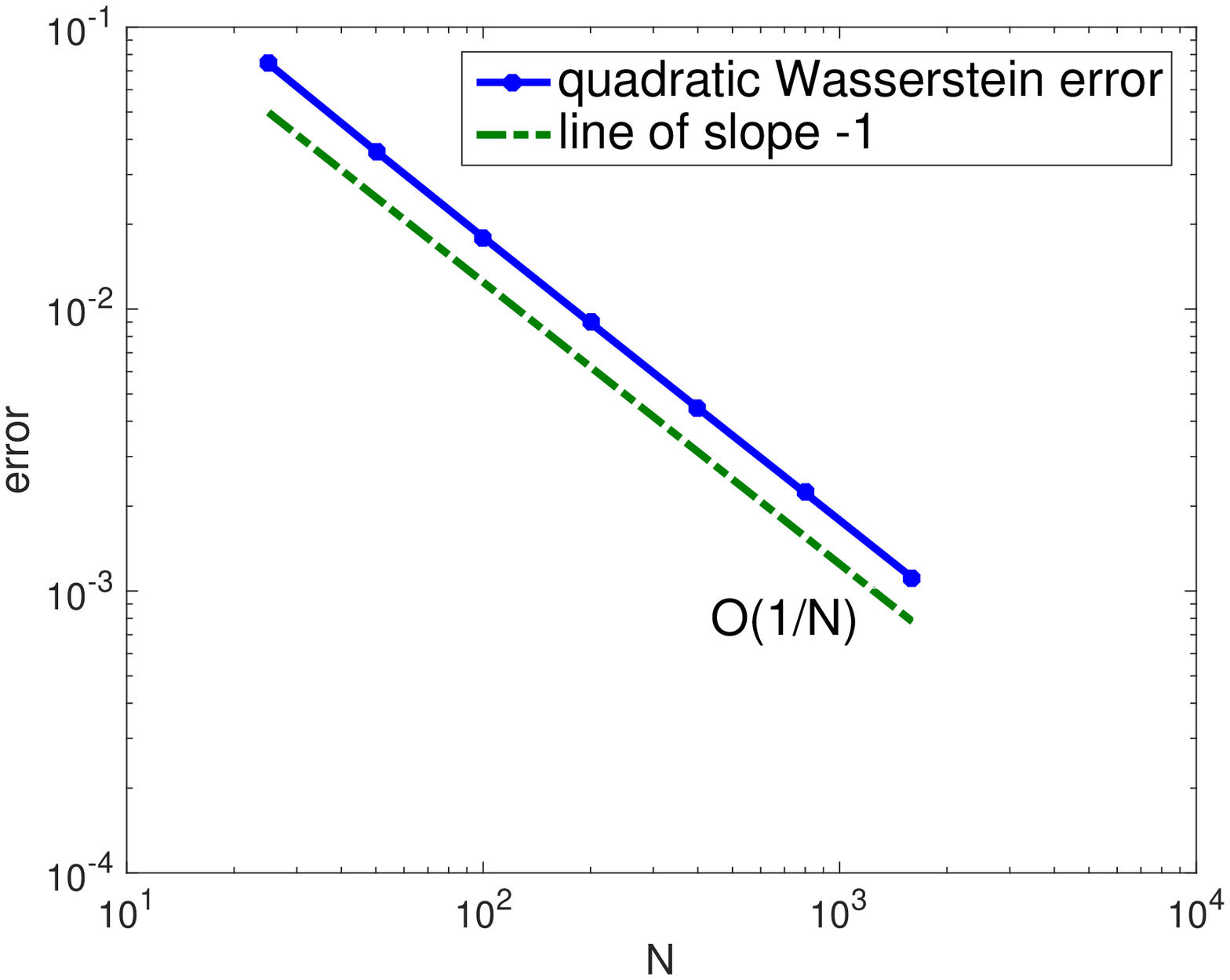}}
	\caption{The porous medium equation with $m=2$.\label{fig:porous}}
\efig
From the plot in Figure \ref{fig:porous-error}, as for the one in Figure \ref{fig:heat-error} for the heat equation, it looks fair to say again that the error of our scheme \eqref{eq:jko-final-explicit} is $e_{d_2} = \mathcal{O}\left(\frac1N\right)$.

%================== THE HEAT EQUATION WITH CONFINEMENT IN 1D ===========================
\subsubsection{The linear Fokker-Planck equation}
\label{subsec:heat-equation-confinement-1d}

Let us consider the heat equation with quadratic confinement potential, i.e., the heat equation with $V(x) = \frac{x^2}{2}$. In this case, regardless of the initial condition, there is a steady state:
\bes
	\rho_\infty(x) := \frac{1}{\sqrt{2\pi}} e^{-\frac{x^2}{2}} \quad \mbox{for all $x \in \R$}.
\ees
Figure \ref{fig:heat-confined-error-discrete} was obtained from the continuum initial profile $\rho_0^\mt{heat}$, see \eqref{eq:rho0heat}, with $I_\mt{init} = [-2.5,2.5]$ for Figure \ref{fig:heat-confined-evo} and $I_\mt{init} = [-4,4]$ for Figure \ref{fig:heat-confined-error-discrete}.

\bfig[!ht]
\centering
	\subcaptionbox{Comparison at $T=4$ for $N=50$.\label{fig:heat-confined-evo}}{\includegraphics[scale=0.41]{./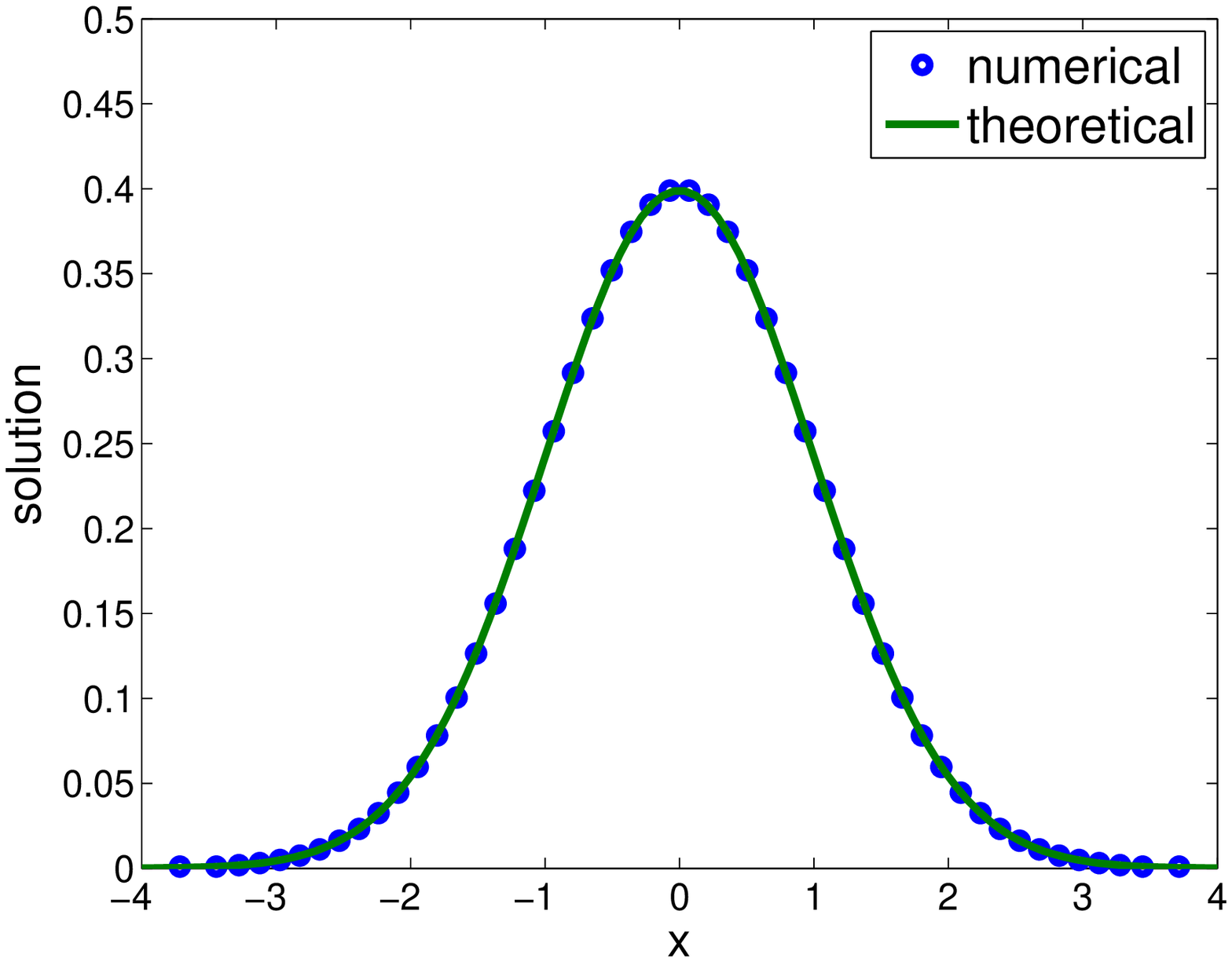}}
	\subcaptionbox{Error with final time $T$ for different $N$'s with respect to the discrete steady state.\label{fig:heat-confined-error-discrete}}{\includegraphics[scale=0.41]{./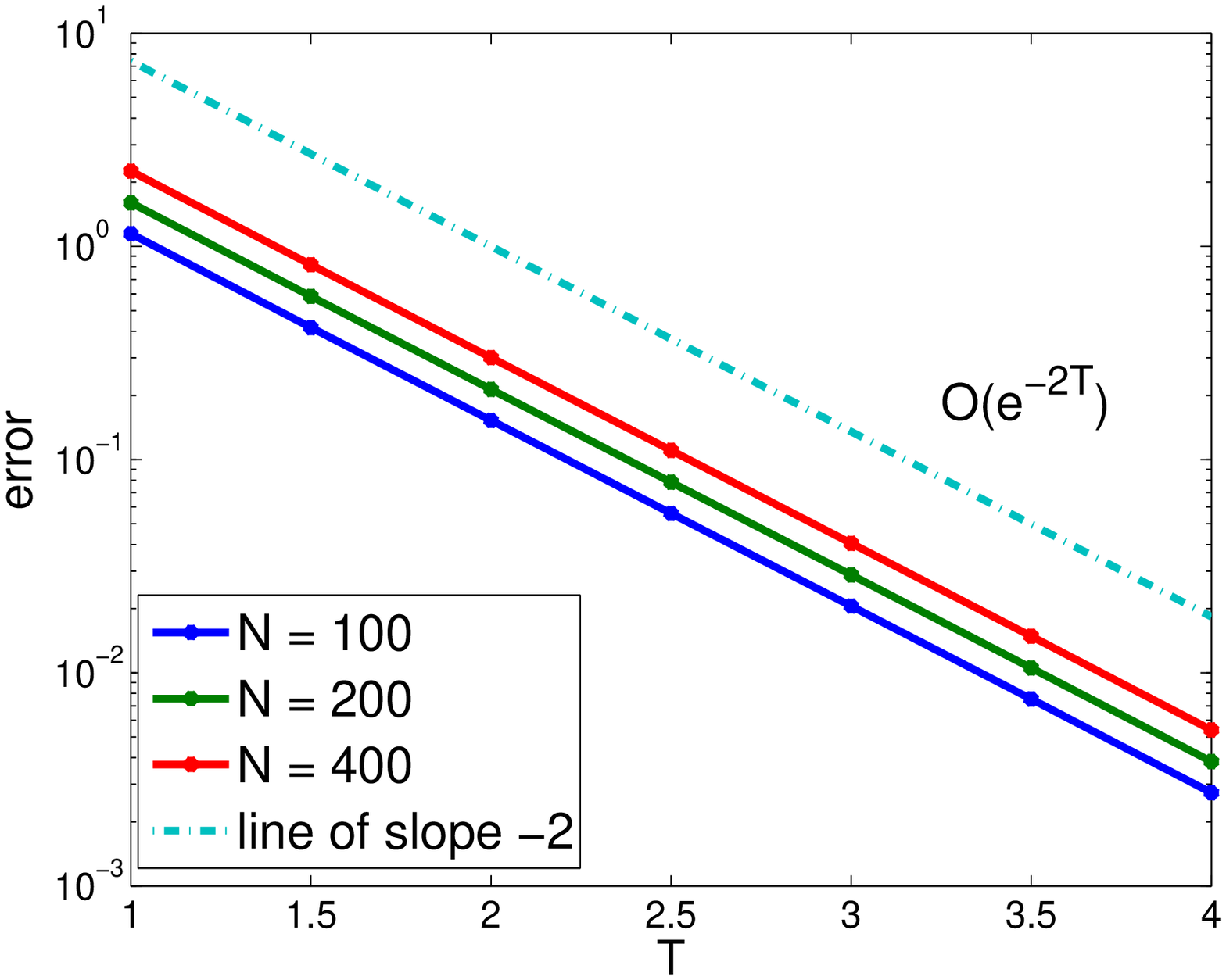}}
	\caption{The linear Fokker-Planck equation with $\Delta t = 10^{-5}$ -- Stabilisation in time of the scheme (rate of convergence to the discrete steady state).\label{fig:heat-confined}}
\efig

In Figure \ref{fig:heat-confined-error-discrete}, we can see that the stabilisation of the scheme towards the discrete steady state (which we arbitrarily define as being the discrete solution obtained at $T = 6$) is linear on a semi-log plot with a slope very close to $-2$, which seems not to depend on the number of particles $N$. We can therefore write the error as $e_{d_2}^* = \mathcal{O}\left(e^{-2T}\right)$, see \eqref{eq:error-discrete}.

%================== THE  POROUS MEDIUM WITH CONFINEMENT IN 1D ===========================
\subsubsection{The nonlinear Fokker-Planck equation}
\label{subsec:porous-medium-equation-confinement-1d}

Let us consider the porous medium equation with quadratic confinement potential, i.e., the porous medium equation with $V(x) = \frac{x^2}{2}$. In this case, regardless of the initial profile and since the total mass is one, the steady state is
\bes
	\rho_\infty(x) := A (R^2 - x^2)_+^{\frac{1}{m-1}} \quad \mbox{for all $x\in\R$},
\ees
where $A = (\frac{m-1}{2m})^{1/(m-1)}$ and $R = (AB(1;\frac 12,\frac{m}{m-1}))^{(1-m)/(m+1)}$. Figure \ref{fig:porous-confined-error-discrete} was obtained from the continuum initial profile $\rho_0^\mt{por}$, see \eqref{eq:rho0porous}, with $I_\mt{init} = \left[-k_0,k_0\right]$, where again $k_0:=t_0^\alpha\sqrt{K/\kappa}$.

\bfig[!ht]
\centering
	\subcaptionbox{Comparison at $T=4$ for $N=50$.\label{fig:porous-confined-evo}}{\includegraphics[scale=0.41]{./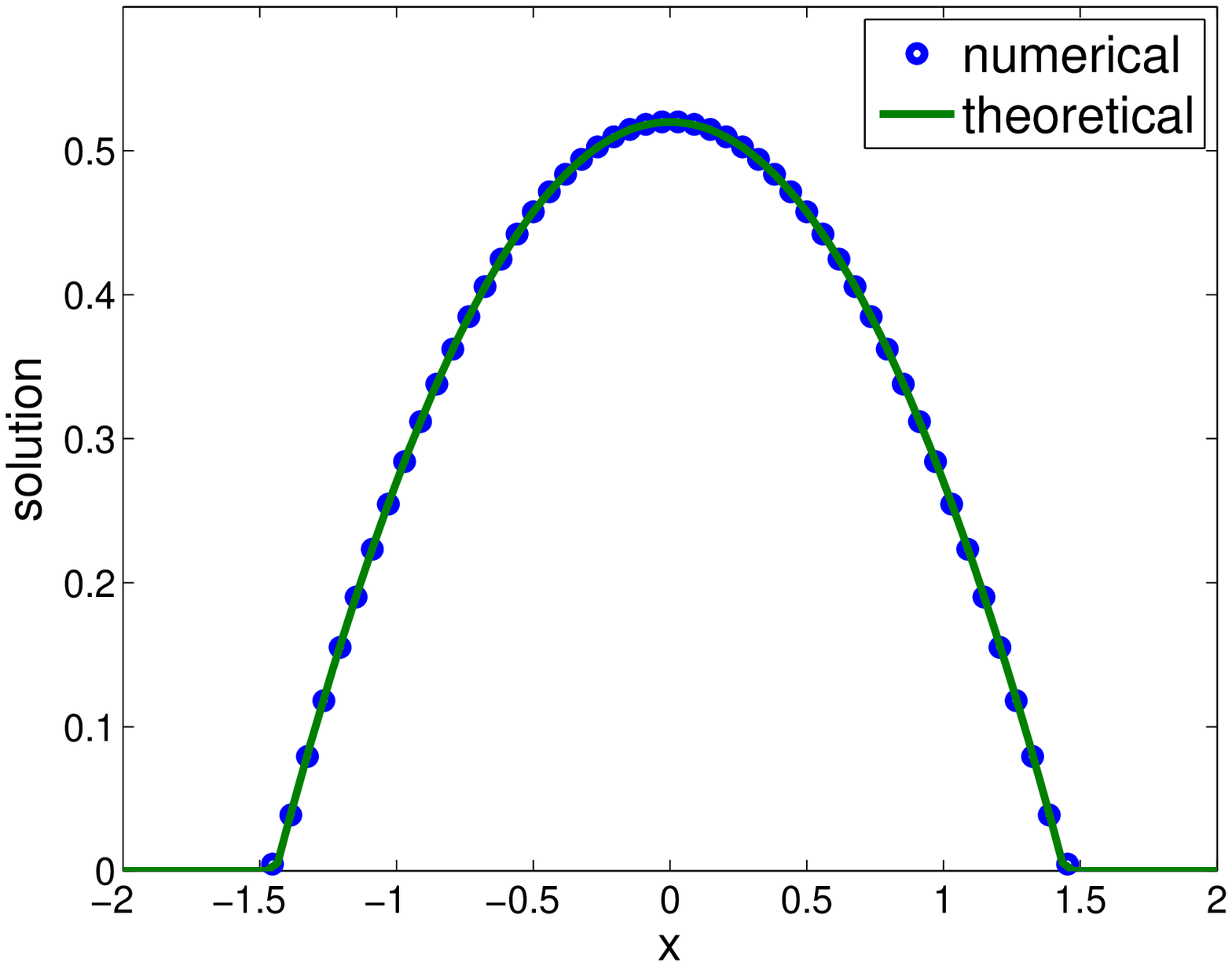}}
	\subcaptionbox{Error with the final time $T$ for different $N$'s with respect to the discrete steady state.\label{fig:porous-confined-error-discrete}}{\includegraphics[scale=0.41]{./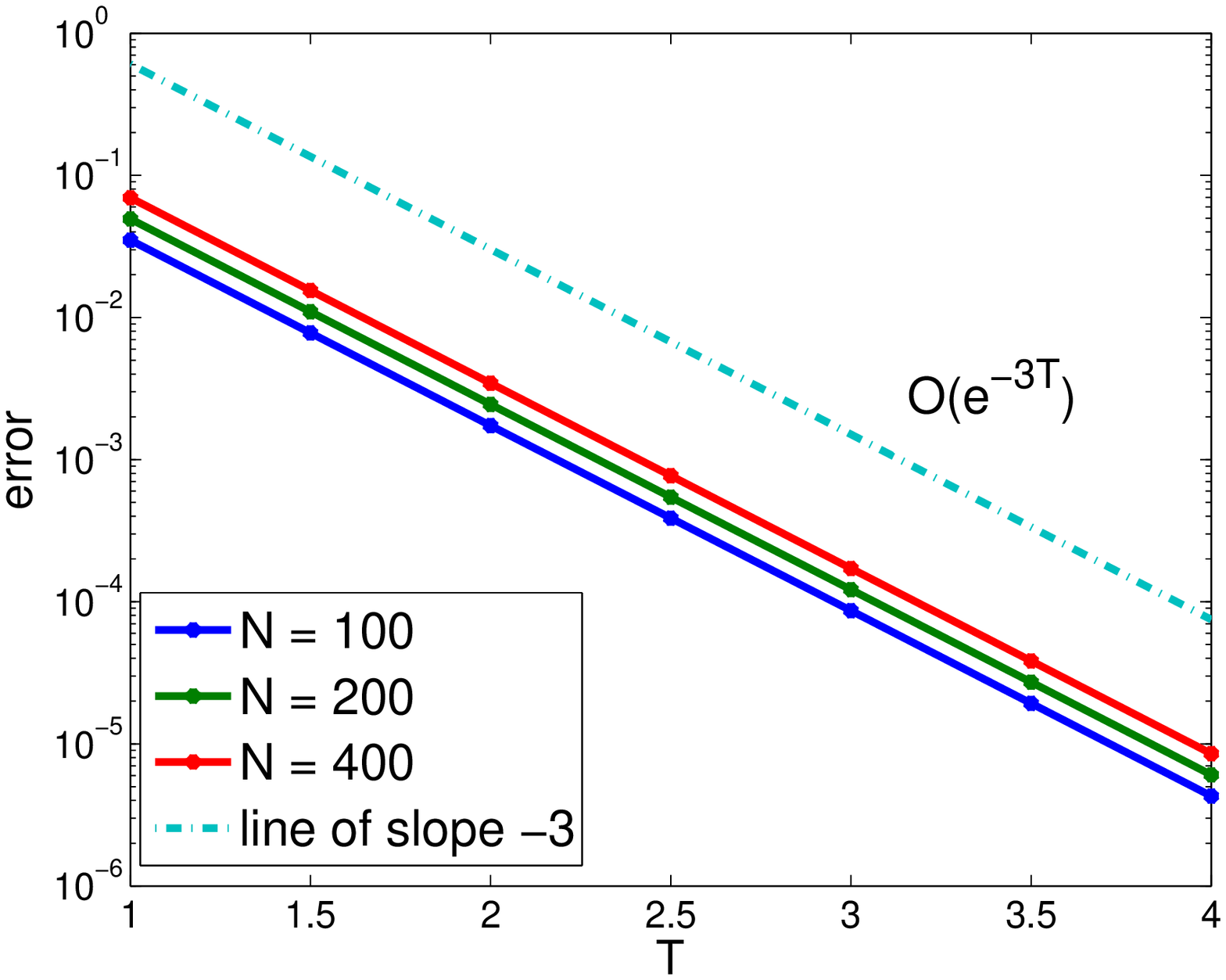}}
	\caption{The nonlinear Fokker-Planck equation with $m=2$ and $\Delta t = \frac{0.1}{N^2}$ -- Stabilisation in time of the scheme (rate of convergence to the discrete steady state).\label{fig:porous-confined}}
\efig

As already noted from Figure \ref{fig:heat-confined-error-discrete}, for the heat equation the numerical error can be written $e_{d_2}^* = \mathcal{O}\left( e^{-2T}\right)$, as it is actually expected from the theory; for the porous medium equation the theory says that we should expect $e_{d_2}^* =  \mathcal{O}\left( e^{-(m+1)T}\right)$, see \cite{CDT} and references therein, which is nicely confirmed numerically by Figure \ref{fig:porous-confined-error-discrete} with $m=2$.

%================== TWO-DIMENSIONAL CASE ===================================================
\subsubsection{A two-dimensional test: the heat equation}
\label{subsubsec:two-dimensions}
It is straightforward to generalise the scheme~\eqref{eq:jko-final-explicit}, derived in Section \ref{subsec:jko}, to higher dimensions whenever the expression~\eqref{eq:distance} for the Wasserstein distance can be used, which is the case when the approximation $\mu^{n+1}$ at time step $n+1$ is sufficiently close to $\mu^n$, i.e., when the time step $\Delta t$ is small enough. Indeed, let us emphasise that the Wasserstein distance is computed between empirical measures on points, and not between their approximations on non-overlapping balls (which are only used to define the diffusion part of the regularised discrete energy functional, see \eqref{eq:energy-discrete}). When the time step is small enough, the Wasserstein distance approximation \eqref{eq:distance} is exact, since then the splitting of mass between empirical measures possibly happening in higher-dimensional optimal transport actually does not occur.

We test our scheme for the heat equation in two dimensions. The initial continuum density is $\rho_0(x)=\frac{1}{4\pi t_0}e^{-|x|^2/(4t_0)}$ with $t_0 = 0.125$, and the particle positions at $T=1$ are shown in Figure~\ref{fig:heat2dfinal}. The data were initialised by fixing the particles on a regular grid as in Figure~\ref{fig:heat2dinitial}, with weights being the integrals of the continuum density $\rho_0$ on the Voronoi cells generated from the particles. 

\bfig[!ht]
\centering
	\subcaptionbox{Initial time.\label{fig:heat2dinitial}}{\includegraphics[scale=0.42]{./twod_init}}
	\subcaptionbox{Final time $T=1$.\label{fig:heat2dfinal}}{\includegraphics[scale=0.42]{./twod_final}}
	\caption{The particles' positions for the two-dimensional heat equation for $N=100$ with $\Delta t = 10^{-4}$.\label{fig:heat2dpart}}
\end{figure}

\bfig[!ht]
\centering
	\subcaptionbox{Evolution of the second moment, $\int |x|^2 \rho$.}{\includegraphics[scale=0.42]{./twomoment}}
	\subcaptionbox{Evolution of the entropy, $\int \rho\log \rho$.}{\includegraphics[scale=0.42]{./twoentropy}}
	\caption{Accuracy for the two-dimensional heat equation with $\Delta t = 10^{-4}$. \label{fig:heat2dstat}}
\end{figure}

The averaged quantities along the time evolution, like the second moments $\sum_{i=1}^N w_i|x_i|^2$ and the entropy $\sum_{i=1}^N w_i \log \frac{w_i}{|B_i|}$, seem to be very accurate, as shown in Figure~\ref{fig:heat2dstat}, and this accuracy does not seem to degenerate with time. However, representing the numerical solution in this two-dimensional test is a delicate issue since gaps between discretisation balls are significant; this is an issue in itself which we do not deal with here.

%================== AGGREGATION-DIFFUSION EQUATIONS ==========================================
\section{Aggregation-Diffusion Equations}
\label{sec:aggregation-diffusion}

%================== MODIFIED KELLER-SEGEL EQUATION ==========================================
\subsection{The modified Keller-Segel equation}
\label{subsec:keller-segel}

We start by considering a modified one-dimensional Keller-Segel equation, that is the continuum gradient flow \eqref{eq:gradient-flow} with $H(\rho) = \rho\log \rho$ and $W(x) = 2\chi\log|x|$ (and $W(0) := 0$), where $\chi > 0$ is a parameter quantifying the attraction (see \cite{CPS,Blanchet} for well-posedness and qualitative properties, and \cite{CG} for the approximation of this equation by a different particle method).

This model shows a critical behaviour depending only on the chemosensitivity strength $\chi$ as the classical Keller-Segel model in two dimensions, that is, there is a dichotomy of blow-up in finite time or global existence which is only determined by $\chi$ being larger or less than 1, see \cite{CPS,Blanchet}. In case $\chi<1$ solutions spread in time behaving like self-similar solutions. To get the leading order profile given by the self-similar solution, a time-space scaling is done for $\chi <1$, which is equivalent to impose a quadratic confinement potential on particles, see \cite{Blanchet}. The long-time behaviour in this subcritical case is given by the profile of the self-similar solution.

\subsubsection{Theoretical properties}
\label{subsubsec:theoretical-properties}

We show that our particle approximation keeps approximately the criticality of the original Keller-Segel model at the discrete level. We show that there exist two positive constants $\chi_1$ and $\chi_2(N)$ such that the following holds: if an appropriate confinement potential $V$ is considered, the discrete and the $p$-approximated discrete gradient flows \eqref{eq:discrete-gradient-flow-inclusion} and \eqref{eq:discrete-gradient-flow-ode} for the modified Keller-Segel equation have steady states for $\chi < \chi_1$; while if $V=0$, the $p$-approximated discrete gradient flow shows finite-time blow-up for $\chi>\chi_2(N)$. Quite surprisingly, $\chi_1$ happens to be exactly the critical parameter at the continuum level, i.e., $\chi_1 = 1$, and $\chi_2(N)$ tends to 1 as $N\to\infty$ and does not depend on $p$ (but only on $N$ and the chosen weights). 

By the term ``blow-up" at the discrete level we mean the event of two, or more, particles colliding. Also note that in the following the term ``maximal time of existence" indicates either the first time when two or more particles of a solution collide, i.e., the first blow-up time, or the first time when the norm of a solution equals $+\infty$.

First, let us prove that the discrete and $p$-approximated discrete confined Keller-Segel equations show no collisions of particles if $\chi < \chi_1$.

\begin{prop} \label{prop:no-collision}
Consider the discrete gradient flow corresponding to the confined Keller-Segel equation with $V$ coercive and such that the function $x \mapsto \inf_{y\in\R} (w_1V(y) + w_NV(x+y)) - \log |x|$ is coercive. Suppose there exists a solution $\bx$ to such gradient flow, emanating from an initial condition $\boldsymbol{x^0} \in \R_w^N$, up to some maximal time of existence, say $T^* > 0$. If
$$
	\chi<\chi_1 :=1,
$$
then no particles of $\bx$ can collide in $[0,T^*)$; furthermore, the minimal inter-particle distance is uniformly bounded from below in time by a positive constant.
\end{prop}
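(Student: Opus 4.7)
The plan is to combine the monotonicity of $\wtE$ along the gradient flow with an energy coercivity estimate that forces $\wtE$ to $+\infty$ whenever particles approach one another, provided $\chi<1$. First, since $\bx$ solves \eqref{eq:discrete-gradient-flow-inclusion}, the chain rule for convex subdifferentials (see \cite[Section~3.2]{Aubin}) yields the a priori bound
\begin{equation*}
\wtE(\bx(t)) \leq \wtE(\boldsymbol{x^0}) =: E_0 \quad \text{for all } t \in [0,T^*).
\end{equation*}
Expanding the energy for the Keller-Segel choices $H(\rho)=\rho\log\rho$ and $W(x)=2\chi\log|x|$ in one dimension gives
\begin{equation*}
\wtE(\bx) = \sum_{i=1}^N w_i \log w_i - \sum_{i=1}^N w_i \log r_i + \sum_{i=1}^N w_i V(x_i) + 2\chi \sum_{1\leq i<j\leq N} w_i w_j \log(x_j-x_i).
\end{equation*}

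I would then argue by contradiction, assuming there is a sequence $t_n\nearrow T^*$ along which $r_{\min}(\bx(t_n))\to 0$. Using $V\geq -C_V$ (from coercivity and continuity) together with the crude bounds $|x_i-x_j|\leq x_N-x_1$ and $-\log r_i\geq -\log(x_N-x_1)$, one isolates $w_1V(x_1)+w_NV(x_N)-(1-2\chi w_1 w_N)\log(x_N-x_1)$ as the part of the lower bound on $\wtE$ that becomes unbounded at infinity. Since $2\chi w_1 w_N<1$ and the hypothesis gives coercivity of $x\mapsto \inf_y(w_1V(y)+w_NV(x+y))-\log|x|$, the energy bound $\wtE(\bx(t_n))\leq E_0$ forces a uniform diameter bound $x_N(t_n)-x_1(t_n)\leq D$, placing all particles in a compact interval $[-R,R]$. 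Along a further subsequence the particles then converge to a limit configuration that partitions into clusters $C_1,\dots,C_L$, where particles in the same cluster share a common limit point while particles in distinct clusters have distinct limits. Writing $m_\alpha := \sum_{i\in C_\alpha}w_i$ and $\epsilon_\alpha(n):=\max_{i,j\in C_\alpha}|x_i(t_n)-x_j(t_n)|\to 0$ for each cluster of size at least two, a cluster-by-cluster rescaling produces
\begin{equation*}
\wtE(\bx(t_n)) = \sum_{\alpha \,:\, |C_\alpha| \geq 2} \Bigl(-m_\alpha + \chi\bigl(m_\alpha^2 - \textstyle\sum_{i\in C_\alpha} w_i^2\bigr)\Bigr) \log \epsilon_\alpha(n) + O(1).
\end{equation*}

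The crucial observation closing the argument is that, since $m_\alpha \leq 1$ and all weights are strictly positive, $m_\alpha^2 - \sum_{i\in C_\alpha} w_i^2 = 2\sum_{i<j,\ i,j\in C_\alpha} w_i w_j < m_\alpha^2 \leq m_\alpha$, so for any $\chi<1=\chi_1$ the prefactor of $\log \epsilon_\alpha(n)$ is strictly negative and each cluster's contribution diverges to $+\infty$. Summing across clusters gives $\wtE(\bx(t_n))\to+\infty$, contradicting the a priori bound; hence there exists $\eta>0$ with $r_{\min}(\bx(t))\geq\eta$ uniformly on $[0,T^*)$, which is the claimed non-collision property. The main obstacle is the cluster-by-cluster rescaling, specifically when particles within one cluster themselves organise into nested sub-clusters at finer scales (so the $r_i$ and $d_{ij}$ are not all of order $\epsilon_\alpha$); handling this cleanly requires an iterative, scale-by-scale blow-up argument, the key point at every nested level being that the coefficient of $\log\epsilon$ is again of the form $-m+\chi(m^2-\sum w_i^2)<0$ for some cluster mass $m\leq 1$.
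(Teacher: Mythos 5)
Your overall strategy (energy monotonicity plus a contradiction argument showing the energy must blow up at a collision when $\chi<1$) is sound, but as written it has two genuine gaps. First, the diameter bound does not follow from the estimate you describe. In your lower bound for $\wtE$ you spend the entire entropy budget on $-\log(x_N-x_1)$ and keep only the $(1,N)$ interaction pair, leaving all the other interaction terms $2\chi w_iw_j\log|x_i-x_j|$ uncontrolled; these are unbounded below exactly in the regime you are analysing, since your contradiction hypothesis is $r_{\min}(\bx(t_n))\to0$. So the inequality $\wtE(\bx(t_n))\le E_0$ does not yield $x_N(t_n)-x_1(t_n)\le D$ without first dominating the near-collision interaction terms by part of the entropy. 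The clean way to do this is the pointwise bound $|x_i-x_j|\ge r_i$ together with the splitting $\log=\log_++\log_-$, which gives $\chi\sum_{i\ne j}w_iw_j\log_-|x_i-x_j|\ge\chi\sum_i w_i(1-w_i)\log_- r_i\ge\chi\sum_i w_i\log_- r_i$; but once you have this you obtain directly $E_0\ge(\chi-1)\sum_i w_i\log_- r_i+C$ with $C$ uniform in time (this is precisely the paper's proof, which also uses the coercivity hypothesis only through $w_1V(x_1)+w_NV(x_N)-\log_+(x_N-x_1)$), and the conclusion, including the uniform positive lower bound on the inter-particle distances, follows immediately --- the whole compactness and cluster machinery becomes unnecessary.

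Second, the part you yourself flag as the main obstacle is not merely unfinished: the asserted fix is incorrect as stated. It is not true that at every nested level the coefficient of $\log\epsilon$ is again of the form $-m+\chi\bigl(m^2-\sum_{i}w_i^2\bigr)$. Take three particles with weights $w_1=w_2=0.4$, $w_3=0.2$, $\chi=0.9$, and gaps $\delta\ll\epsilon$ (positions $0,\delta,\epsilon$): the coefficient of $\log\epsilon$ obtained by freezing the finer scale is $-w_3+2\chi w_3(w_1+w_2)=w_3\bigl(2\chi(w_1+w_2)-1\bigr)>0$, so that single level contributes $-\infty$, not $+\infty$; only the cumulative coefficient across the two scales (using $\log\delta\le\log\epsilon$) is negative and equal to $-1+\chi\bigl(1-\sum_i w_i^2\bigr)$. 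A correct multi-scale argument therefore needs an ordered-scales/telescoping step, not the level-by-level sign claim. In short, your route can in principle be completed, but the two steps above must be repaired, whereas the paper's one-line comparison $|x_i-x_j|\ge r_i$ bypasses both issues and additionally gives the quantitative bound \eqref{boundbelowEKS}.
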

\begin{proof}
The energy $\wtE$ is a Lyapunov functional, i.e.,
\be\label{eq:decreasing-functional}
	\wtE(\bx(t)) \leq \wtE(\boldsymbol{x^0}) := E_0< +\infty \quad \mbox{for all $t\in [0,T^*)$},
\ee
see \cite[Theorem 1 of Section 3.4]{Aubin}. Fix $t\in [0,T^*)$ and get, by \eqref{eq:energy-discrete},
\begin{align*} \label{eq:discrete-functional-ks}
	\wtE(\bx(t)) &= \sum_{i=1}^N w_i\log w_i - \sum_{i=1}^N w_i \log r_i(t) + \sum_{i=1}^N w_i V(x_i(t))\\
	&\phantom{={}} + \chi \sum_{i=1}^N\sum_{\substack{j=1\\j\neq i}}^N w_i w_j \log|x_i(t) - x_j(t)| \numberthis ,
\end{align*}
where $r_i(t) = \min(\Delta x_i(t),\Delta x_{i+1}(t))$. Writing $\log_-x := \log x$ if $0< x < 1$ and $\log_-x:= 0$ if $x\geq 1$, and using that $\log$ is increasing,
\begin{align*}
	\sum_{i=1}^N\sum_{\substack{j=1\\j\neq i}}^N w_i w_j \log|x_i(t) - x_j(t)| &\geq \sum_{i=1}^N\sum_{\substack{j=1\\j\neq i}}^N w_i w_j \log\underset{\substack{k\in\{1,\dots,N\}\\ k\neq i}}{\min}|x_i(t) - x_k(t)|\\
	&\geq \sum_{i=1}^N w_i (1-w_i) \log_- r_i(t) \geq \sum_{i=1}^N w_i\log_- r_i(t).
\end{align*}
Hence, writing $\log_+x := \log x$ if $x \geq 1$ and $\log_+x:= 0$ if $0\leq x < 1$, and by \eqref{eq:discrete-functional-ks},
\begin{align*}
	\wtE(\bx(t)) &\geq \sum_{i=1}^N w_i\log w_i + \sum_{i=1}^N w_i V(x_i(t)) + (\chi-1)\sum_{i=1}^N w_i \log_- r_i(t) - \sum_{i=1}^N w_i \log_+r_i(t)\\
	&\geq \sum_{i=1}^N w_i\log w_i + \sum_{i=2}^{N-1} w_i V(x_i(t)) + w_1V(x_1(t)) + w_NV(x_N(t))\\
	&\phantom{={}}+ (\chi-1)\sum_{i=1}^N w_i \log_- r_i(t) - \log_+(x_N(t) - x_1(t)),
\end{align*}
using that $-\log_+r_i(t) \geq -\log_+(x_N(t) - x_1(t))$ for all $i\in \{1,\dots,N\}$. From the assumptions on $V$ we know that $V$ is bounded from below and also that $w_1V(x_1(t)) + w_NV(x_N(t)) - \log_+(x_N(t) - x_1(t))$ is bounded from below uniformly in time. Therefore there exists a constant $C\in\R$, independent of $t$, such that
\be\label{boundbelowEKS}
	E_0 \geq \wtE(\bx(t)) \geq (\chi-1)\sum_{i=1}^N w_i \log_- r_i(t) + C,
\ee
using \eqref{eq:decreasing-functional}. We see that if $\chi < \chi_1 = 1$, the minimal inter-particle distances $r_i(t)$ cannot get arbitrarily small, or the energy $\wtE(\bx(t))$ gets larger than its initial value $E_0$, which violates the fact that the system is a gradient flow. Hence the result.
\end{proof}
\begin{rem}
	The proof above is given only for the discrete case; however, note that it can be easily adapted to the $p$-approximated one, if $p\geq1$, by Proposition \ref{prop:min-p}\ref{it:above} with $s=2$, and without the need to change the constant $\chi_1$.
\end{rem}

We can now show the global existence in time and the existence of steady states for the discrete and $p$-approximated discrete confined Keller-Segel equations.

\begin{prop} \label{prop:existence-steady-states}
Consider the discrete gradient flow corresponding to the confined Keller-Segel equation with $V$ satisfying the same hypotheses as in Proposition \ref{prop:no-collision}. If $\chi<\chi_1$, then any solution to this gradient flow, if it exists, exists globally in time and the gradient flow has a steady state.
\end{prop}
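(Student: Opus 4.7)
The plan is to leverage Proposition \ref{prop:no-collision} together with the coercivity assumptions on $V$ to confine every gradient-flow trajectory to a compact subset of $\R_w^N$ on which the vector field $-\p_w\wtE$ is smooth, and then extract a steady state either by direct energy minimisation or by a LaSalle-type argument.

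First I would address global existence. By Proposition \ref{prop:no-collision}, along any maximal solution $\bx\colon[0,T^*)\to\R_w^N$ there is a constant $\delta>0$, depending only on $\boldsymbol{x^0}$ and $\chi$, with $r_i(t)\geq\delta$ for all $i$ and all $t\in[0,T^*)$; in particular no collision can occur at $T^*$. To rule out escape to infinity, I would revisit the chain of inequalities culminating in \eqref{boundbelowEKS} and keep the confinement terms: the energy-decay inequality $\wtE(\bx(t))\leq E_0$ yields
\bes
	E_0 \geq \sum_{i=1}^N w_i \log w_i + \sum_{i=2}^{N-1} w_i V(x_i(t)) + w_1 V(x_1(t)) + w_N V(x_N(t)) - \log_+\bigl(x_N(t)-x_1(t)\bigr) + (\chi-1)\sum_{i=1}^N w_i \log_- r_i(t).
\ees
Since $\chi<1$ the last sum is non-positive, so the remaining terms are bounded above by a constant depending only on $E_0$. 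Coercivity of $x\mapsto \inf_y\{w_1V(y)+w_NV(x+y)\}-\log|x|$ then bounds $x_N(t)-x_1(t)$, and coercivity of $V$ itself bounds $|x_i(t)|$ for the interior indices, all uniformly in $t\in[0,T^*)$. Thus $\bx(t)$ stays in a compact subset $K\subset\R_w^N$ on which particles remain separated by at least $\delta$; by standard ODE extension, $T^*=+\infty$.

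For the existence of a steady state I would then apply LaSalle's invariance principle. Because $\wtE$ is smooth on the open set where particles are distinct, and the orbit $\{\bx(t):t\geq 0\}$ has compact closure in this open set, the $\omega$-limit set $\omega(\bx(0))$ is non-empty, compact and invariant under the flow. Along the flow $\wtE$ is non-increasing with $\frac{\der}{\der t}\wtE(\bx(t))=-|\grad_w\wtE(\bx(t))|_w^2$, so $\wtE$ is constant on $\omega(\bx(0))$; invariance forces $\grad_w\wtE\equiv 0$ on this limit set, producing a steady state. Alternatively one can minimise $\wtE$ directly on $\R_w^N$: the bounds just obtained show that sub-level sets are contained in $K$, lower semicontinuity of $\wtE$ on the set of distinct configurations together with Proposition \ref{prop:no-collision} applied to a minimising sequence (whose energy is bounded) prevents collisions in the limit, and the minimiser is automatically a critical point.

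The main obstacle is the second step of the global-existence argument, namely extracting a uniform spatial bound from the energy when both the confinement and the logarithmic interaction are present: the positive logarithmic tail $-\log_+(x_N-x_1)$ competes with the confinement and must be absorbed using precisely the coercivity hypothesis on $x\mapsto\inf_y\{w_1V(y)+w_NV(x+y)\}-\log|x|$. Once this bound is in place the remainder of the argument is essentially routine, and the adaptation to the $p$-approximated flow proceeds via Proposition \ref{prop:min-p}\ref{it:above} exactly as in the remark following Proposition \ref{prop:no-collision}.
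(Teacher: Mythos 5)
Your proposal is correct in substance and, in its main line, follows the paper's proof: global existence is obtained exactly as in the paper by combining the energy-dissipation bound with the coercivity of $x\mapsto\inf_{y}\left(w_1V(y)+w_NV(x+y)\right)-\log|x|$ to bound $x_N(t)-x_1(t)$, then the coercivity of $V$ to trap all particles in a fixed ball, and Proposition \ref{prop:no-collision} to exclude collisions; and your ``alternative'' steady-state argument---minimising $\wtE$ over a compact sublevel set---is precisely the paper's direct-method argument. Two remarks. First, a sign slip: the term $(\chi-1)\sum_i w_i\log_- r_i(t)$ is \emph{non-negative} (both factors are $\leq 0$), and it is exactly this non-negativity that allows you to drop it and bound the remaining terms by $E_0$; as written (``the last sum is non-positive'') the inference would not follow, though the fix is immediate. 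Second, your primary steady-state route via LaSalle is genuinely different from the paper's, but as stated it is not available here: $\wtE$ is \emph{not} smooth on the set of distinct configurations (the radius $\min(\Delta x_i,\Delta x_{i+1})$ is non-differentiable where two neighbouring gaps coincide), the discrete flow is a differential inclusion rather than a smooth ODE, and the energy is not convex because of the logarithmic attraction, so invariance of the $\omega$-limit set and the identity $\frac{\der}{\der t}\wtE(\bx(t))=-|\grad_w\wtE(\bx(t))|_w^2$ would need separate justification (at best with $\p_w^0\wtE$ in place of $\grad_w\wtE$). The direct minimisation avoids all of this; there you should replace the appeal to Proposition \ref{prop:no-collision} (which concerns trajectories of the flow) by the static estimate \eqref{boundbelowEKS}, which shows that on the sublevel set $\{\wtE\leq E_0\}$ the quantities $r_i$ are bounded away from zero and, together with your spatial bound, that this sublevel set is compact inside the open set of distinct configurations; lower semi-continuity then yields a global minimiser, which is a steady state since $0\in\p_w\wtE$ there. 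This is exactly how the paper concludes.
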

\begin{proof}
Let $\chi < \chi_1 = 1$. Suppose there exists such a solution $\bx$, emanating from an initial condition $\boldsymbol{x^0} \in \R_w^N$, defined on $[0,T^*)$, see Proposition \ref{prop:no-collision}, and assume that $x_N(t) - x_1(t) \to +\infty$ as $t\to T^*$. For all $t\in[0,T^*)$, the proof of Proposition \ref{prop:no-collision} implies that there exists a $t$-independent constant $C_1\in\R$ such that
\bes
	E_0 \geq \wtE(\bx(t)) \geq (\chi-1)\sum_{i=1}^N w_i \log_- r_i(t) + f(x_N(t)-x_1(t)) + C_1 \geq f(x_N(t)-x_1(t)) + C_1,
\ees
since $(\chi-1)\sum_{i=1}^N w_i \log_- r_i(t)\geq0$, and where $f(x_N(t) - x_1(t)) := \inf_{y\in\R} (w_1V(y) + w_NV(x_N(t)-x_1(t) +y)) - \log_+(x_N(t) - x_1(t))$. By the growth assumption at infinity on $V$, we have $f(x_N(t) - x_1(t)) \to +\infty$ as $t\to T^*$, which implies that the inequality above is violated for a time $t$ close enough to $T^*$. Therefore $x_N(t) - x_1(t)$ cannot diverge as $t\to T^*$, and thus there exists $C_2\in\R$, uniform in $t$, such that $\log_+ (x_N(t) - x_1(t)) \leq C_2$ for all $t \in [0,T^*)$. Therefore,
\bes
	E_0 \geq \wtE(\bx(t)) \geq w_1V(x_1(t)) + w_N V(x_N(t)) - C_2 + C_1,
\ees
which, by coercivity of $V$ shows that $x_1(t)$ and $x_N(t)$ cannot diverge. Thus, there exists some constant $\ell>0$, independent of time, such that $\{x_1(t),\dots,x_N(t)\} \subset B(0,\ell)$ for all $t\in[0,T^*)$. This, together with the no-collision result in Proposition \ref{prop:no-collision}, shows that the maximal time of existence of the discrete gradient flow solution is $T^*=\infty$.

Finally, the functional $\wtE$ is lower semi-continous and bounded from below on any sublevel set of $\wtE$ which are compact due to \eqref{boundbelowEKS}. Indeed, the previous argument shows that $x_i\in B(0,\ell)$ for all $\bx \in \R_w^N$ such that $E_0\geq \wtE(\bx)$. Moreover, the same argument leading to \eqref{boundbelowEKS} implies that
$$
E_0\geq \wtE(\bx) \geq  C,
$$
for all $\bx \in \R_w^N$ such that $E_0\geq \wtE(\bx)$, since $\chi<1$. Therefore, by a direct method of calculus of variations, we get that $\wtE$ has a global minimiser, which ends the proof.
\end{proof}

\begin{rem}
	Similarly to Proposition \ref{prop:existence-steady-states} the proof above is given only for the discrete case, but is easily adaptable to the $p$-approximated one if $p\geq1$, by Proposition \ref{prop:min-p}\ref{it:above} with $s=2$.
\end{rem}

\begin{rem}
	The assumptions on $V$ of Propositions \ref{prop:no-collision} and \ref{prop:existence-steady-states} are in particular satisfied by $V(x) = \frac{x^k}{k}$ for any $k\geq1$. Propositions \ref{prop:no-collision} and \ref{prop:existence-steady-states} are also true for any $W$ bounded from below and satisfying Hypothesis \ref{hyp:VW}, with no required constraint on $\chi$; in particular this is the case for the linear Fokker-Planck equation, that is with $V(x) = \frac{x^2}{2}$ and $W = 0$.
\end{rem}

Let us now turn to the supercritical case. In the unconfined continuum modified Keller-Segel equation, it is known that solutions blow up in finite time if $\chi > 1$. The proof of non-existence of global-in-time solutions is obtained by computing the evolution of the second moment $M_2(t)$ of solutions $\rho(t)$ at any time $t > 0$. Then, a formal computation leads to
\be\label{eq:cont-moment}
	\dfrac{\mathrm{d}M_2}{\mathrm{d}t}(t) = \dfrac{\mathrm{d}}{\mathrm{d}t} \int_\R x^2 \d \rho(t,x) = 2(1-\chi).
\ee
Therefore the evolution of the second moment is linear in time with slope $2(1-\chi)$. This slope is negative if $\chi > 1$, which implies that $M_2(t)$ becomes zero in finite time leading to concentration of mass in finite time and contradiction with the assumption of global existence. We want here to show that our $p$-approximated discrete gradient flow \eqref{eq:discrete-gradient-flow-ode} preserves this finite-time blow-up property for some numerical critical parameter $\chi_2(N)$, at least when all particles have same weight. Recall that, at the discrete level, we define blow-up as being the event of two particles colliding.

\begin{prop} \label{prop:keller-segel-blow-up}
Let $p > 0$ and consider the $p$-approximated discrete unconfined Keller-Segel gradient flow with $w_i = 1/N$ for all $i\in \{1,\dots,N\}$, on the whole time line $[0,\infty)$. All solutions blow up in finite time if $\chi$ is greater than
\bes
	\chi_2(N) := 1 + \frac{1}{N-1}.
\ees
\end{prop}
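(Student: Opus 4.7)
The plan is to mimic the continuum second-moment blow-up computation~\eqref{eq:cont-moment} at the discrete level, by exploiting the fact that in the Keller-Segel case the energy $\wtE^p$ behaves simply under rescaling of the positions. Define the discrete second moment
\[
M_2(t) := \sum_{i=1}^N w_i x_i(t)^2 = \frac{1}{N}\sum_{i=1}^N x_i(t)^2 \geq 0.
\]
Along the $p$-approximated gradient flow~\eqref{eq:discrete-gradient-flow-ode}, using $\dot x_i = -w_i^{-1}\partial_{x_i}\wtE^p(\bx)$, one immediately gets
\[
\frac{\der M_2}{\der t} = -2\sum_{i=1}^N x_i\,\partial_{x_i}\wtE^p(\bx),
\]
so the whole game is to evaluate the right-hand side in closed form.

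Next I would observe that for $H(\rho)=\rho\log\rho$ and $W(x)=2\chi\log|x|$ (with $V=0$), the energy on $\R_w^N$ reads
\[
\wtE^p(\bx) = \sum_{i=1}^N w_i\log w_i - \sum_{i=1}^N w_i\log r_i^p(\bx) + \chi\sum_{i=1}^N\sum_{\substack{j=1\\ j\neq i}}^N w_iw_j \log|x_i-x_j|.
\]
Under the rescaling $\bx \mapsto \lambda\bx$ with $\lambda>0$, the smoothed radii satisfy $r_i^p(\lambda\bx)=\lambda r_i^p(\bx)$ (since $\minp_p$ is positively $1$-homogeneous), so
\[
\wtE^p(\lambda\bx) = \wtE^p(\bx) + \Bigl(-1 + \chi\bigl(1-\textstyle{\sum_i w_i^2}\bigr)\Bigr)\log\lambda,
\]
using $\sum_i w_i=1$ and $\sum_{i\neq j}w_iw_j = 1-\sum_i w_i^2$. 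Differentiating both sides at $\lambda=1$ gives the Euler-type identity
\[
\sum_{i=1}^N x_i\,\partial_{x_i}\wtE^p(\bx) = -1 + \chi\Bigl(1-\sum_{i=1}^N w_i^2\Bigr).
\]
For equal weights $w_i=1/N$ we have $\sum_i w_i^2 = 1/N$, hence
\[
\frac{\der M_2}{\der t} = 2 - \frac{2\chi(N-1)}{N} = \frac{2(N-1)}{N}\bigl(\chi_2(N)-\chi\bigr),
\]
which is a negative constant as soon as $\chi>\chi_2(N)$.

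To conclude, I would run the argument by contradiction: suppose a solution $\bx$ exists globally on $[0,\infty)$ without two particles ever colliding. Then the identity above integrates to
\[
M_2(t) = M_2(0) + \frac{2(N-1)}{N}\bigl(\chi_2(N)-\chi\bigr)\,t,
\]
which becomes negative in finite time, contradicting $M_2(t)\geq 0$. Hence the flow cannot be globally defined, i.e., there must be a finite time at which two particles collide (blow-up in the sense defined in the paper). The only technical point to take care of is that the Euler identity and thus the ODE for $M_2$ are valid precisely on the maximal existence interval where $\bx\in\R_w^N$ (all particles distinct, so $r_i^p>0$ and $\wtE^p$ smooth); this is automatic before the first collision, and is exactly the obstacle that forces blow-up in finite time. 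I do not expect any serious difficulty beyond being careful with the homogeneity of $\minp_p$ and with the sign of $\sum_{i\neq j}w_iw_j$; the hardest conceptual step is spotting that the logarithmic scaling of both the internal and interaction parts collapses to a single $\log\lambda$ term, making the virial computation exact (not just inequality-based) at the discrete level.
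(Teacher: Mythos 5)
Your proof is correct and arrives at exactly the paper's moment identity \eqref{eq:discrete-moment}, but the central computation is carried out by a genuinely different device. The paper evaluates $\sum_i x_i\,\der x_i/\der t$ head-on: it splits it into the internal (diffusion) contribution $A_1$, telescoped through the quantities $\Delta_i^j=1+(\Delta x_j/\Delta x_i)^p$ to give exactly $N$, and the interaction contribution $A_2$, evaluated by antisymmetry of $1/x$ to give $N(N-1)/2$. You instead exploit that, for $H(\rho)=\rho\log\rho$ and $W=2\chi\log|\cdot|$, the energy is logarithmically homogeneous: since $\minp_p$ is positively $1$-homogeneous (this persists under the convention $\Delta x_1=\Delta x_{N+1}=+\infty$), one has $\wtE^p(\lambda\bx)=\wtE^p(\bx)+\bigl(-1+\chi(1-\sum_i w_i^2)\bigr)\log\lambda$, and the Euler identity delivers the virial term in one line. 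Both routes are rigorous; yours is shorter, manifestly independent of $p$, avoids the boundary-term bookkeeping in $A_1$, and in fact gives $\der M_2/\der t=2\bigl(1-\chi(1-\sum_i w_i^2)\bigr)$ for arbitrary weights, which makes transparent the paper's remark that $\chi_2$ depends only on $N$ and the chosen weights (equal weights just fix the constant to $\chi_2(N)=1+\frac{1}{N-1}$). One small point you should state explicitly at the end: the contradiction only shows that the maximal existence time $T^*$ is finite, and a priori the solution could also cease to exist by escaping to infinity rather than by collision; but since $\der M_2/\der t$ is a negative constant on $[0,T^*)$, $M_2=|\bx|_w^2$ is bounded by $M_2(0)$, so the positions stay bounded and the only possible obstruction at $T^*$ is a collision of particles, i.e., blow-up in the paper's sense---this is the same short case analysis with which the paper closes its proof.
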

\begin{proof}
	Suppose that there exists $\bx$, a $p$-approximated discrete Keller-Segel gradient flow solution emanating from an initial condition $\boldsymbol{x^0} \in \R_w^N$, defined on some maximal interval of existence $[0,T^*)$. Let us compute the evolution of the second moment of $\mu_N$, the empirical measure associated to $\bx$, at any $t\in[0,T^*)$.
\be \label{eq:moment}
	\dfrac{\der M_2}{\der t}(t) = \dfrac{\der}{\der t} \int_\R x^2 \d\mu_N(t,x) = \dfrac{\der}{\der t} \dfrac 1N \sum_{i=1}^N x_i^2(t) = \dfrac{2}{N} \sum_{i=1}^N x_i(t) \dfrac{\der x_i}{\der t}(t).
\ee
In the following we drop the dependencies on time for the sake of simplicity.

Suppose $T^* = \infty$. We want to find a contradiction if $\chi > \chi_2(N)$ by computing explicitly the evolution of the second moment in \eqref{eq:moment}. Write $\Delta_i^j:=1+\left(\Delta x_j/\Delta x_i\right)^p$ for all $i,j\in\{1,\dots,N\}$ with $i\neq j$, recalling the convention $\Delta x_1 = \Delta x_{N+1} = +\infty$ and setting $\Delta_0^1 = \Delta_{N+2}^{N+1} = +\infty$. By \eqref{eq:discrete-gradient-flow-ode},
\begin{align*}
	\sum_{i=1}^N x_i \dfrac{\der x_i}{\der t} &= \underbrace{\sum_{i=1}^N \left(\left(\dfrac{x_i/\Delta x_i}{\Delta_{i-1}^i} + \dfrac{x_i/\Delta x_i}{\Delta_{i+1}^i}\right) - \left(\dfrac{x_i/\Delta x_{i+1}}{\Delta_i^{i+1}} + \dfrac{x_i/\Delta x_{i+1}}{\Delta_{i+2}^{i+1}}\right)\right)}_{=:A_1}\\
	&\phantom{={}}- \dfrac{2\chi}{N} \underbrace{\sum_{i=1}^N x_i \sum_{\substack{j=1\\j\neq i}}^N \bar W'(x_i - x_j)}_{=:A_2},
\end{align*}
where $\bar W := \log|\cdot|$. First, compute $A_1$ by appropriately rearranging the sum terms,
\begin{align*}
	A_1 &=  \sum_{i = 3}^{N-2} \left(\left(\dfrac{x_{i}/\Delta x_{i}}{\Delta_{i-1}^i} + \dfrac{x_{i}/\Delta x_{i}}{\Delta_{i+1}^i}\right)-\left(\dfrac{x_{i}/\Delta x_{i+1}}{\Delta_i^{i+1}} + \dfrac{x_{i}/\Delta x_{i+1}}{\Delta_{i+2}^{i+1}}\right)\right) + \left(1 + \dfrac{1}{\Delta_3^2} \right)\\
	&\phantom{={}}+ \left(1 + \dfrac{1}{\Delta_{N-1}^N} \right) - \left( \dfrac{x_{2}/\Delta x_{3}}{\Delta_2^3}+ \dfrac{x_{2}/\Delta x_{3}}{\Delta_4^3} \right) + \left( \dfrac{x_{N-1}/\Delta x_{N-1}}{\Delta_N^{N-1}}+ \dfrac{x_{N-1}/\Delta x_{N-1}}{\Delta_{N-2}^{N-1}} \right).
\end{align*}
Also,
\begin{align*}
	\sum_{i = 3}^{N-2} & \left(\left(\dfrac{x_{i}/\Delta x_{i} }{\Delta_{i-1}^i} + \dfrac{x_{i}/\Delta x_{i} }{\Delta_{i+1}^i}\right) - \left(\dfrac{x_{i}/\Delta x_{i+1}}{\Delta_i^{i+1}} + \dfrac{x_{i}/\Delta x_{i+1}}{\Delta_{i+2}^{i+1}}\right) \right)\\
	&= \left( \dfrac{x_{3}/\Delta x_{3}}{\Delta_4^3}+ \dfrac{x_{3}/\Delta x_{3}}{\Delta_2^3} \right) - \left( \dfrac{x_{N-1}/\Delta x_{N-1}}{\Delta_{N-2}^{N-1}}+ \dfrac{x_{N-1}/\Delta x_{N-1}}{\Delta_N^{N-1}} \right) + \sum_{i = 3}^{N-2} \left(\dfrac{1}{\Delta_i^{i+1}} + \dfrac{1}{\Delta_{i+2}^{i+1}}\right).
\end{align*}
Hence, by combining the last two computations, we get
\begin{align*}
	A_1 &= \sum_{i = 3}^{N-2} \left(\dfrac{1}{\Delta_i^{i+1}} + \dfrac{1}{\Delta_{i+2}^{i+1}}\right) + \left( \dfrac{1}{\Delta_4^3}+ \dfrac{1}{\Delta_2^3} \right)+ \left(1 + \dfrac{1}{\Delta_3^2}\right) +\left(1 + \dfrac{1}{\Delta_{N-1}^N}\right)\\
	&= \sum_{i = 2}^{N-1} \left(\dfrac{1}{\Delta_i^{i+1}} + \dfrac{1}{\Delta_{i+1}^i}\right) + 2 = \sum_{i = 2}^{N-1} 1 + 2 = N.
\end{align*}
Then, compute $A_2$ by using the anti-symmetry of $\bar W'(x) = \frac1x$,
\begin{align*}
	A_2 &= (x_1 - x_2) \bar W'(x_1 - x_2) + (x_1-x_3)\bar W'(x_1-x_3) + \dots + (x_1-x_N) \bar W'(x_1-x_N)\\
	& + (x_2 - x_3) \bar W'(x_2 - x_3) + (x_2-x_4)\bar W'(x_2-x_4) + \dots + (x_2-x_N)\bar W'(x_2-x_N) + \dots\\
	& + (x_{N-1} - x_N) \bar W'(x_{N-1}-x_N) = (N-1) + (N-2) + \dots + 1 = \textstyle{\sum_{i=1}^{N-1} (N-i) = \frac{(N-1)N}{2}}.
\end{align*}
Therefore, for all $t \in[0,T^*) = [0,\infty)$,
\be\label{eq:discrete-moment}
	\dfrac{\der M_2}{\der t}(t) = \dfrac{2}{N} \left( N - \dfrac{2\chi}{N} \dfrac{(N-1)N}{2} \right) = 2\left(1 - \chi\left(1 - \frac{1}{N}\right)\right) = 2\left(1-\frac1N\right)(\chi_2(N) - \chi).
\ee
Hence the evolution of the second moment is linear with a negative slope, since by assumption $\chi > \chi_2(N)$, which clearly contradicts the fact that the maximal time of existence $T^*=\infty$, and therefore the solution $\bx$ exists only up to a finite time: $T^*< \infty$. At exactly that time, only two things may happen: either the norm of the solution equals $+\infty$, i.e., $|\bx|_w = +\infty$, or two or more particles collide. The first option is not plausible since trivially the second moment of an empirical measure is finite at all times. We are thus only left with the collision of particles, that is $\bx$ has to blow up in finite time.
\end{proof}

\subsubsection{Simulations}
\label{subsubsec:simulations}

We give here a few simulations for the modified Keller-Segel equation showing various blow-up characteristics when $V=0$. As we want to capture the blow-up we used an adaptive time-step size as follows. For every time step $n\in \{0,\dots,M-1\}$, suppose we have a time-step size $\Delta_n t$, and compute the velocity $v_i^n$ of each particle $x_i^n$, $1\leq i \leq N$. Then fix a tolerance $\delta = 0.25$, and define
\bes
	 \delta_i = \begin{cases} \Delta_n t & \mbox{if $\Delta_n t \leq \frac{\delta \min_p(\Delta x_i^n, \Delta x_{i+1}^n)}{|v_i^n|}$},\\ \frac{\delta \min_p(\Delta x_i^n, \Delta x_{i+1}^n)}{|v_i^n|} & \mbox{otherwise}. \end{cases}
\ees
Finally, renew $\Delta_n t := \min_{i\in\{1,\dots,N\}} \delta_i$, compute the positions $x_i^{n+1}$ with the new $\Delta_n t$, and start over until $\Delta_n t > 10^{-7}$; when $\Delta_n t \leq 10^{-7}$ stop the simulation. We took $\Delta_0 t = 10^{-5}$ as the very initial time-step size. In Figures \ref{fig:ks}, \ref{fig:blowup} and \ref{fig:ks-twobumps} the simulations shown stopped due to this adaptive time-step procedure.

Figures \ref{fig:ks} and \ref{fig:blowup} show the results of simulations with initial continuum profile $\rho_0^\mt{heat}$, with $I_\mt{init} = [-2.5,2.5]$. From Figure \ref{fig:ks} one can see that the scheme we used captures nicely the formation of the blow-up for a supercritical parameter $\chi$.

\bfig[!ht]
	\subcaptionbox{Early evolution.\label{fig:ks-evo}}{\includegraphics[scale=0.41]{./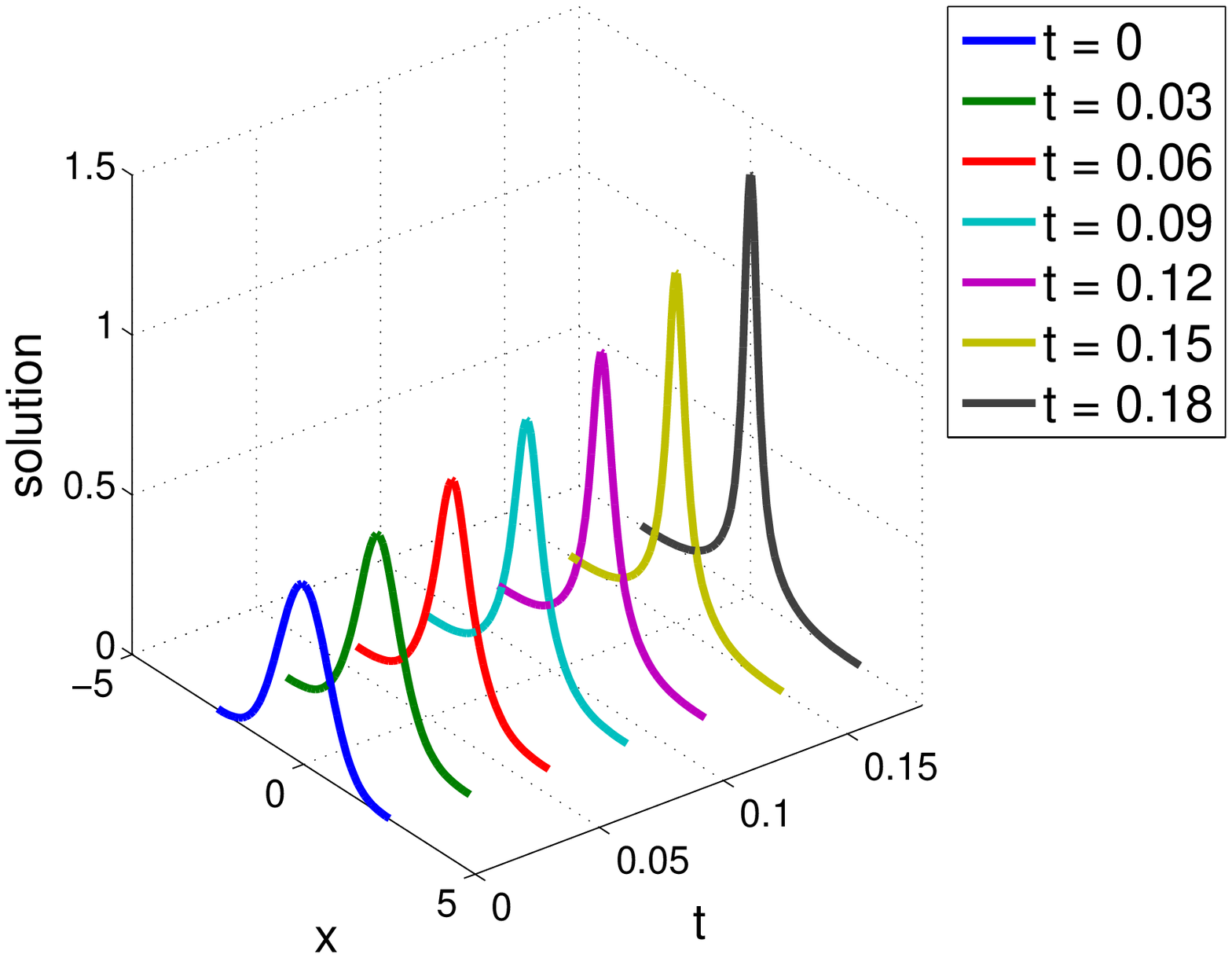}}
	\subcaptionbox{Blow-up formation.\label{fig:ks-blowup}}{\includegraphics[scale=0.41]{./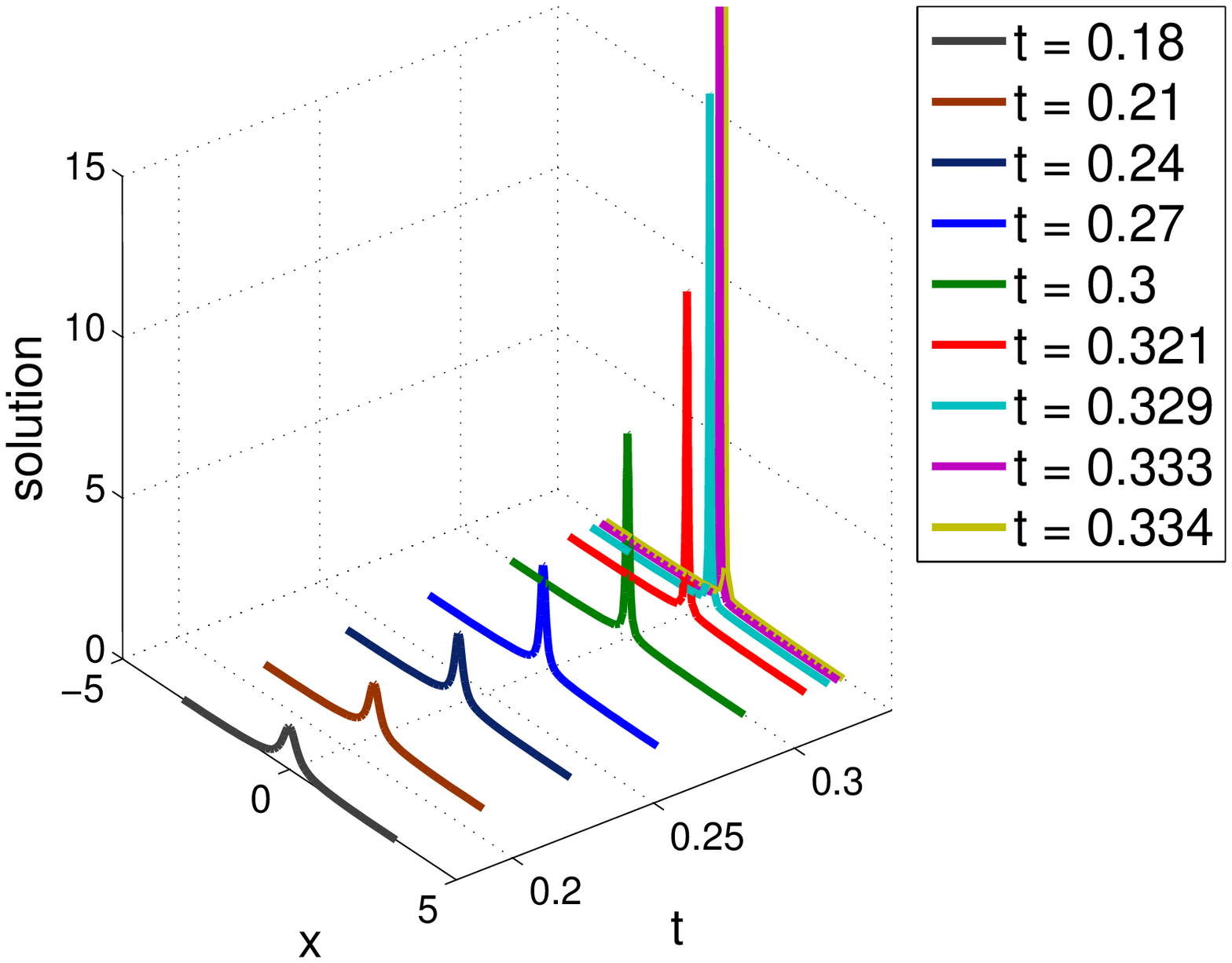}}
	\caption{The modified Keller-Segel equation with $\chi = 1.5$ for $N=50$.\label{fig:ks}}
\efig

\bfig[!ht]
\centering
	\subcaptionbox{Evolution of the second moment for $N = 100$.\label{fig:ks-moment}}{\includegraphics[scale=0.41]{./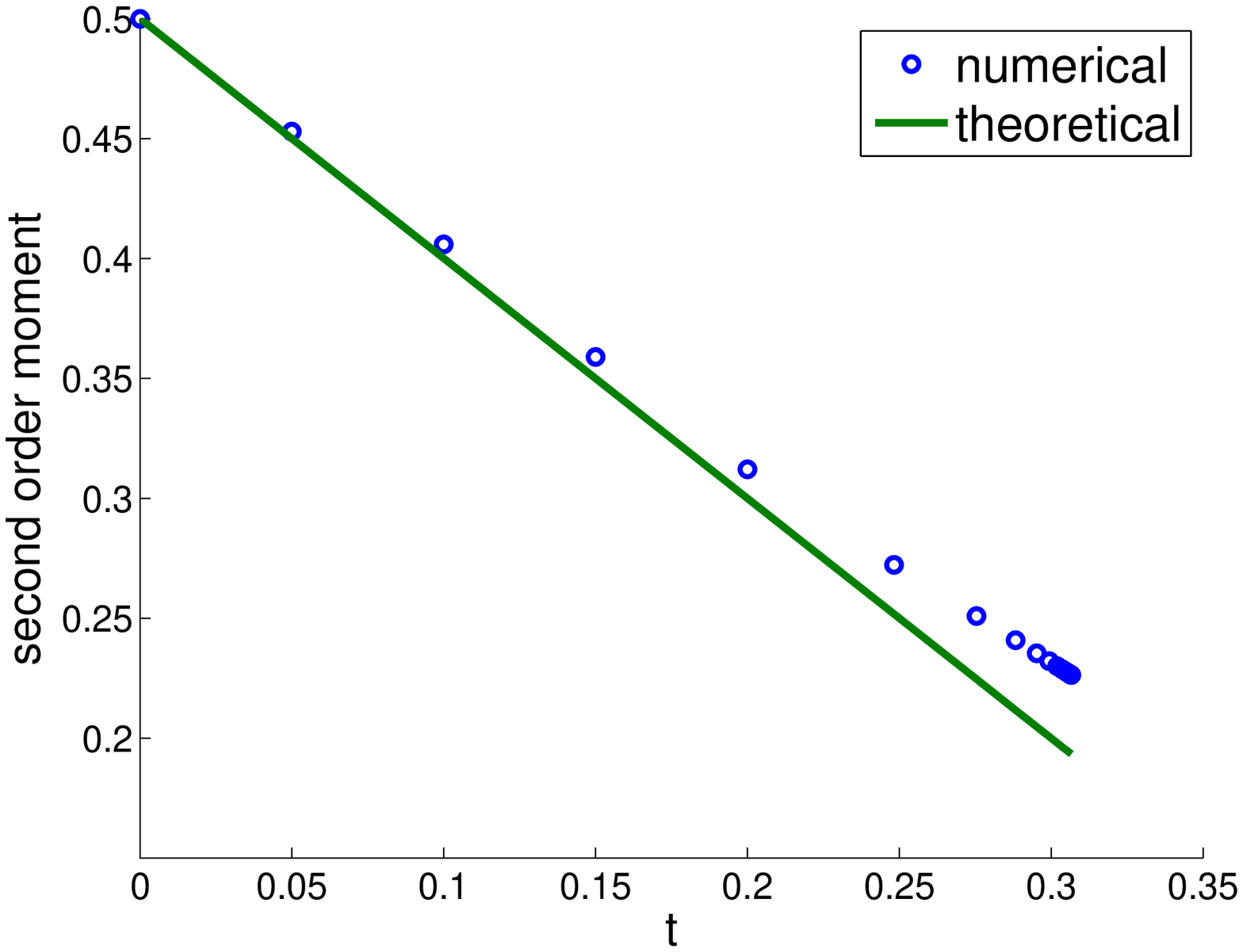}}
	\subcaptionbox{Particle trajectories up to first numerical blow-up for $N = 50$ (not all particles are represented).\label{fig:ks-positions}}{\includegraphics[scale=0.42]{./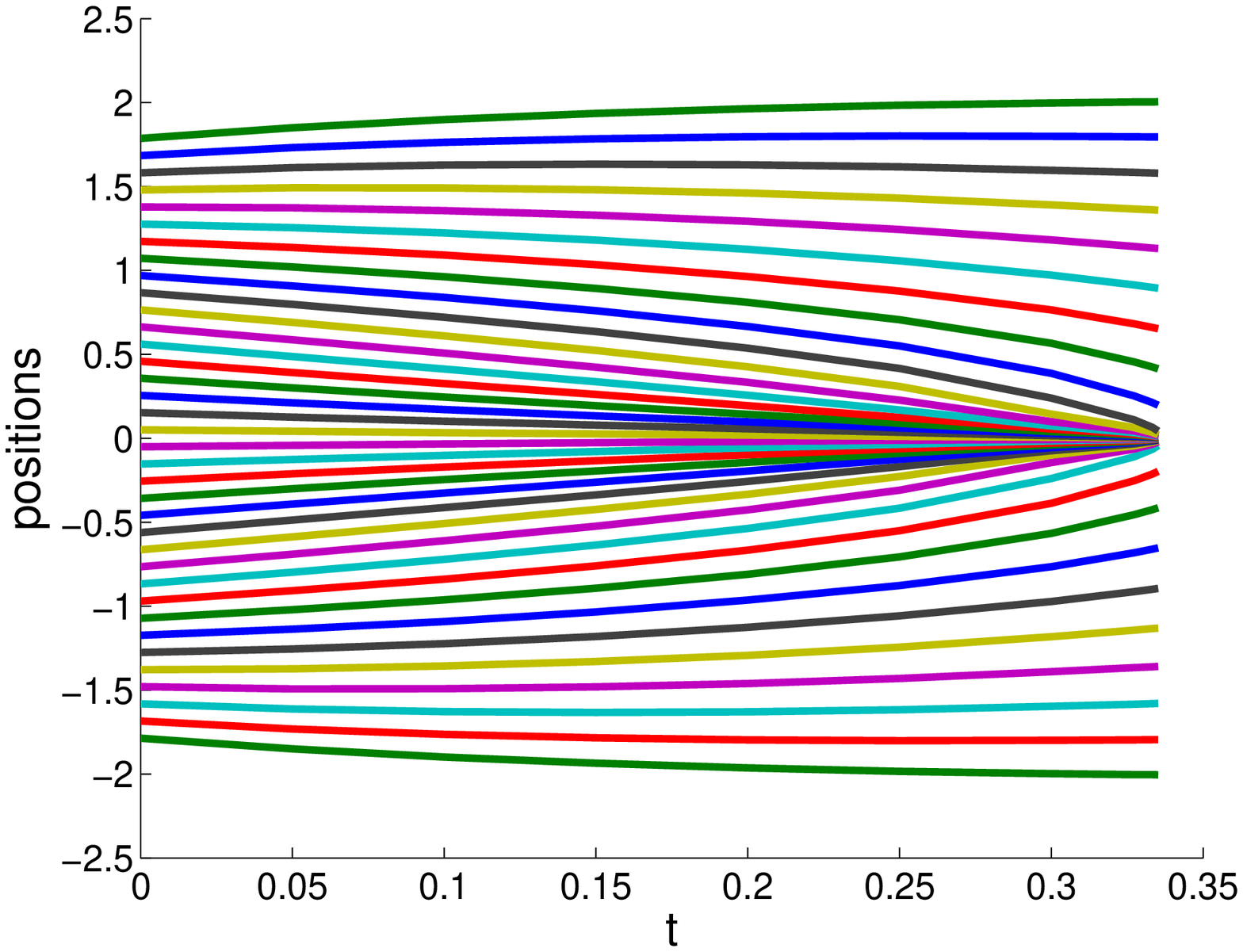}}
	\caption{The blow-up of the modified Keller-Segel equation with $\chi = 1.5$.\label{fig:blowup}}
\efig

Figure \ref{fig:ks-moment} shows that the evolution of the second moment is linear for some range of time with a slope that deviates slightly from the theoretical one, as expected from Proposition \ref{prop:keller-segel-blow-up} and its proof. Actually, by comparing \eqref{eq:cont-moment} and \eqref{eq:discrete-moment}, it can be seen that this slope deviation decreases as $N$ increases. For larger times, as the blow-up is approached and the time step refined, the numerical slope deviates even more from the theoretical one.

Figure \ref{fig:ks-positions} shows the trajectories of the particles up to the first blow-up time, defined numerically as the first time when the distance between two particles gets smaller than some chosen number $d_\mt{min}$, or equivalently when the adaptive time-step size $\Delta_nt$ gets larger than $10^{-7}$, see above. A possible procedure to continue the evolution after the first blow-up time is merging particles into a new heavier one whenever the distance between these particles gets less than a certain threshold, say proportional to $d_\mt{min}$; the weight of the new particle is then chosen to be the sum of the merged particles, and the position the barycentre of the merged particles. This procedure might give an idea of how the particles behave after the first blow-up, but we found that it is not very accurate since the post-collision trajectories strongly depended on the choice of the threshold, which is arbitrary. We found that the analysis of the post-collision behaviour is very delicate without having clear criteria for deciding when to merge particles and how many simultaneously. We thus leave this issue for further analysis and future work. 

Figure \ref{fig:ks-twobumps} shows the result with a continuum two-bump initial profile, with $I_\mt{init} = [-4.5,4.5]$:
\bes
	\rho_0(x) = \frac{1}{2\sqrt{4\pi t_0}}e^{-\frac{(x+2)^2}{4t_0}} + \frac{1}{2\sqrt{4\pi t_0}}e^{-\frac{(x-2)^2}{4t_0}} \quad \mbox{with $t_0=0.25$}.
\ees

\bfig[!ht]
\centering
	\subcaptionbox{Evolution with $\chi = 1.8$ for $N=50$.}{\includegraphics[scale=0.41]{./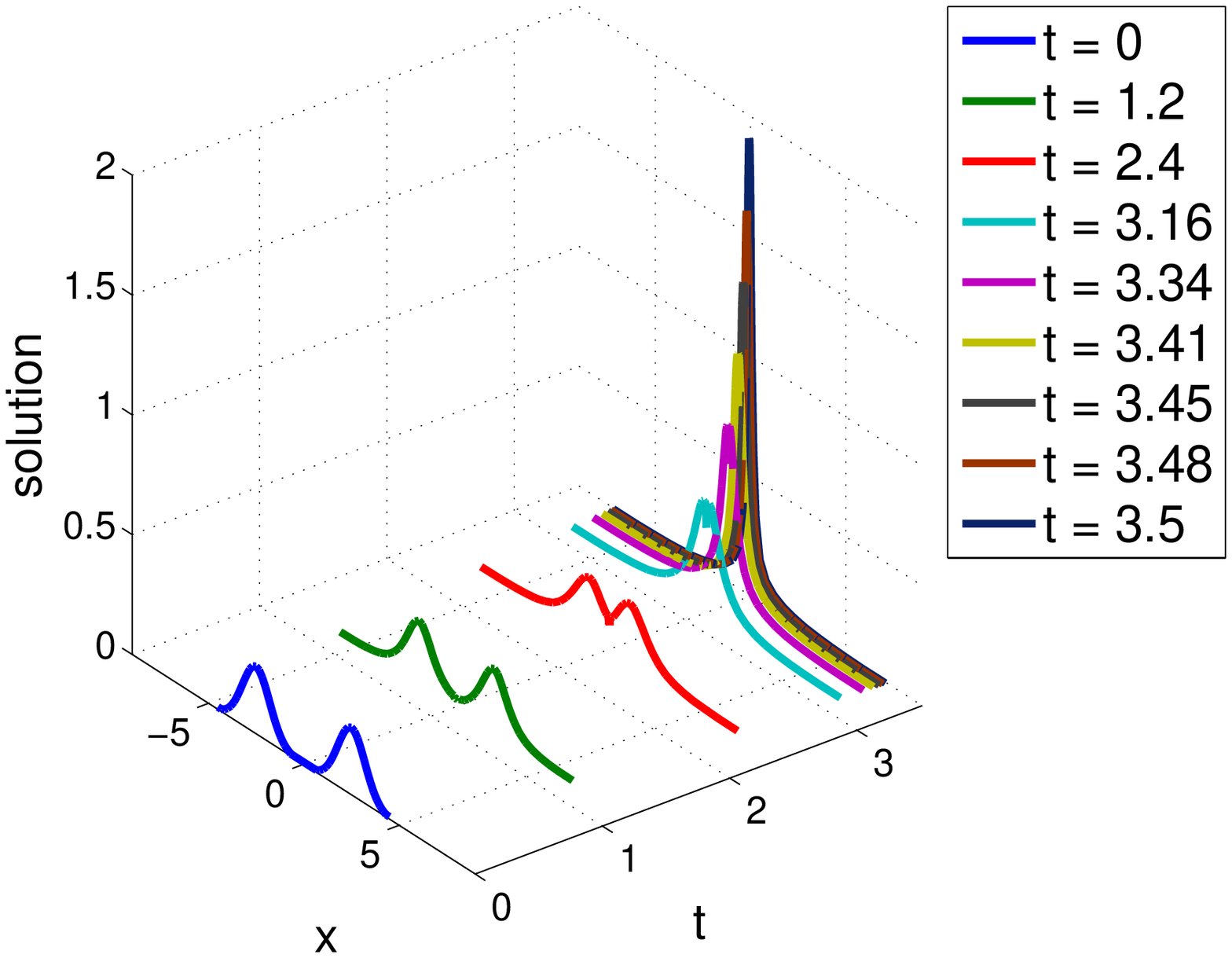}}
	\subcaptionbox{Evolution with $\chi = 3$ for $N=100$.\label{fig:ks-twobumps-distinct}}{\includegraphics[scale=0.41]{./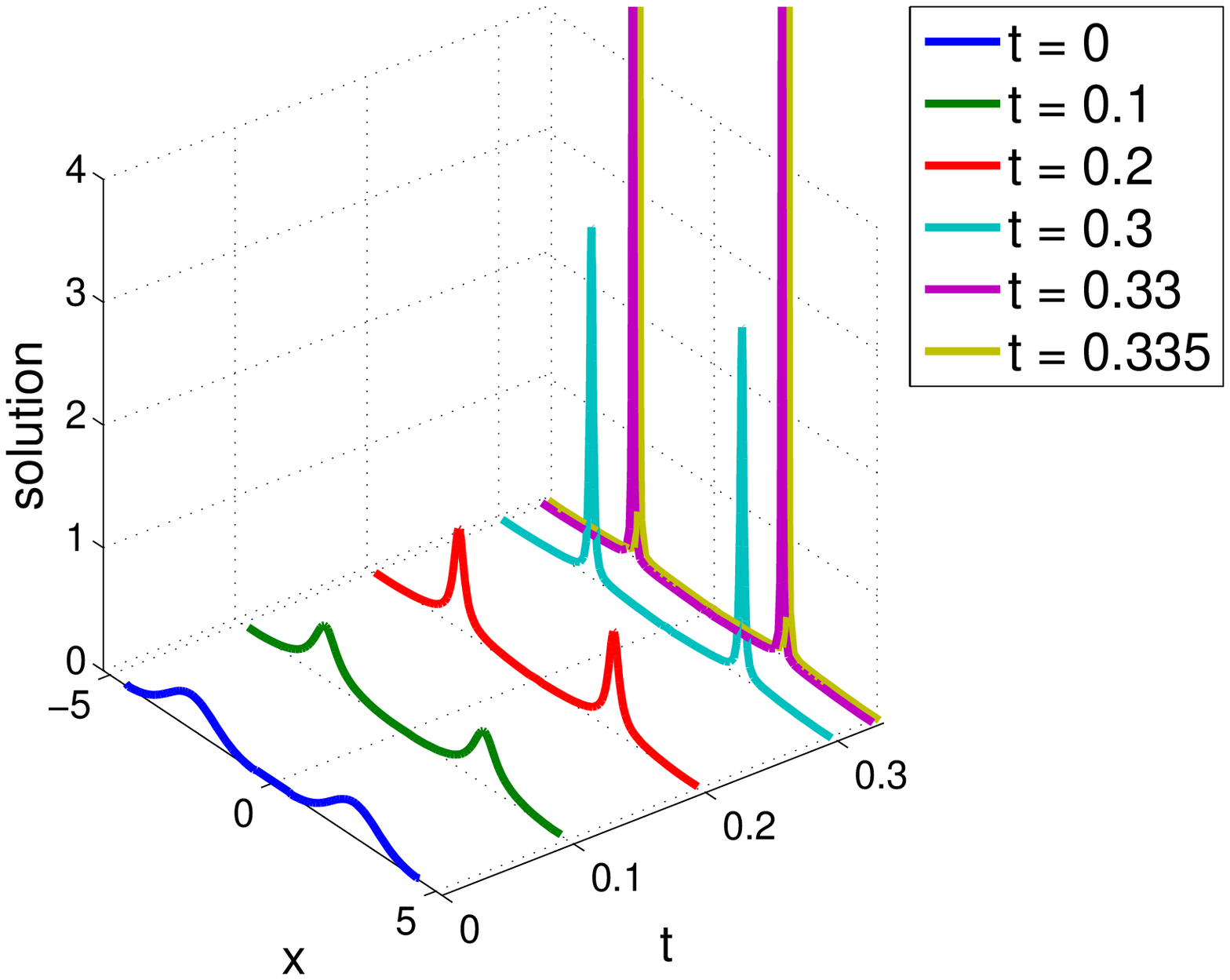}}
	\caption{Blow-up formation for the modified Keller-Segel equation with two initial Gaussian bumps.\label{fig:ks-twobumps}}
\efig

Figure \ref{fig:ks-twobumps} shows the possible formation of several Dirac masses, according to how much attraction is involved in the system and to how many bumps are present at the beginning. It seems like the more attraction, the more Dirac masses can form. Note that the two peaks in Figure \ref{fig:ks-twobumps-distinct} are actually of same height despite a displaying artifact; also, even if not clear from Figure \ref{fig:ks-twobumps-distinct}, the two peaks get slightly closer to each other with time, but then blow up before merging.

%================== KELLER-SEGEL NONLINEAR DIFFUSION ==========================================
\subsection{The modified Keller-Segel equation with nonlinear diffusion}
\label{subsec:keller-segel-nl}

Let us now consider the modified one-dimensional Keller-Segel equation with nonlinear diffusion, i.e., the continuum gradient flow \eqref{eq:gradient-flow} with $H(\rho) = \frac{\rho^m}{m-1}$, $m>1$, $V = 0$ and $W(x) = 2\chi\log|x|$ (and $W(0):=0$). The initial continuum profile we used here is $\rho_0^\mt{heat}$, with $I_\mt{init} = [-2.5,2.5]$.

\bfig[!ht]
\includegraphics[scale=0.41]{./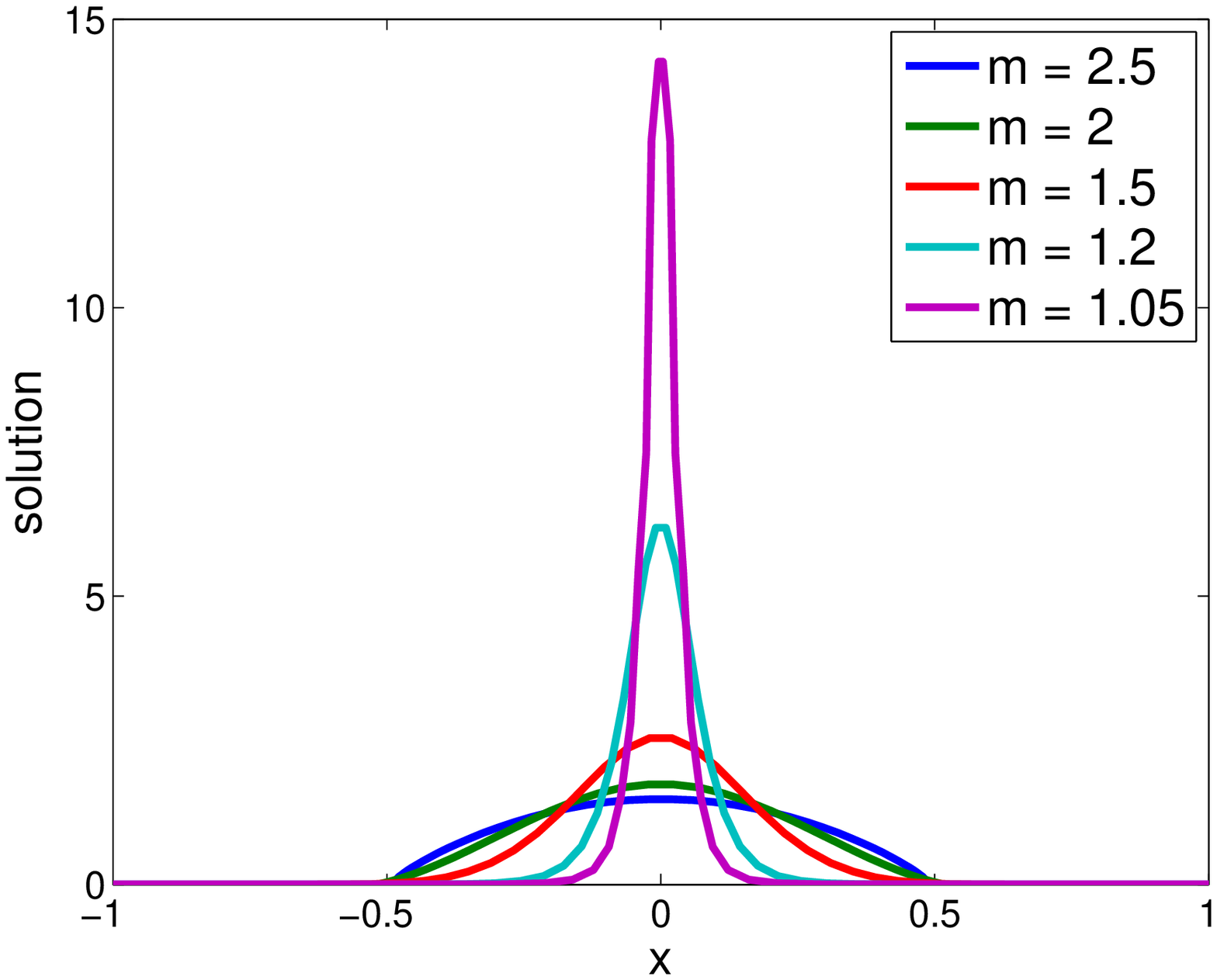}
\caption{The modified nonlinear Keller-Segel equation with $\chi = 1.4$ for different choices of $m$, for $N=50$ at $T=4$ with $\Delta t = 10^{-5}$.\label{fig:nlks}}
\efig

Each curve in Figure \ref{fig:nlks} is a good approximation of a steady state for the modified Keller-Segel equation with nonlinear diffusion. For each $m$, the steady state is different; as $m$ tends to $1$ the steady state ``squeezes'' and looks as if it is approaching a Dirac mass, which is the ``steady state'' of the modified Keller-Segel equation with linear diffusion studied in Section \ref{subsec:keller-segel}.

%================== COMPACTLY SUPPORTED POTENTIAL ==========================================
\subsection{A compactly supported potential with nonlinear diffusion}
\label{subsec:compact}

We consider here the continuum gradient flow when $H(\rho) = \frac{\rho^m}{m-1}$, $m>1$, $V=0$ and $W(x) = -c\max(1- |x|,0) + c$, $c > 0$, is a compactly supported interaction potential. In Figure \ref{fig:compact} the considered continuum initial profile is a uniform distribution on the interval $[-2,2]$. Here $I_\mt{init} = [-2,2]$ and the end particles were set to have weights equal to $0.001$.

\bfig[!ht]
\centering
	\subcaptionbox{Formation of metastable state.\label{fig:compact-evo1}}{\includegraphics[scale=0.41]{./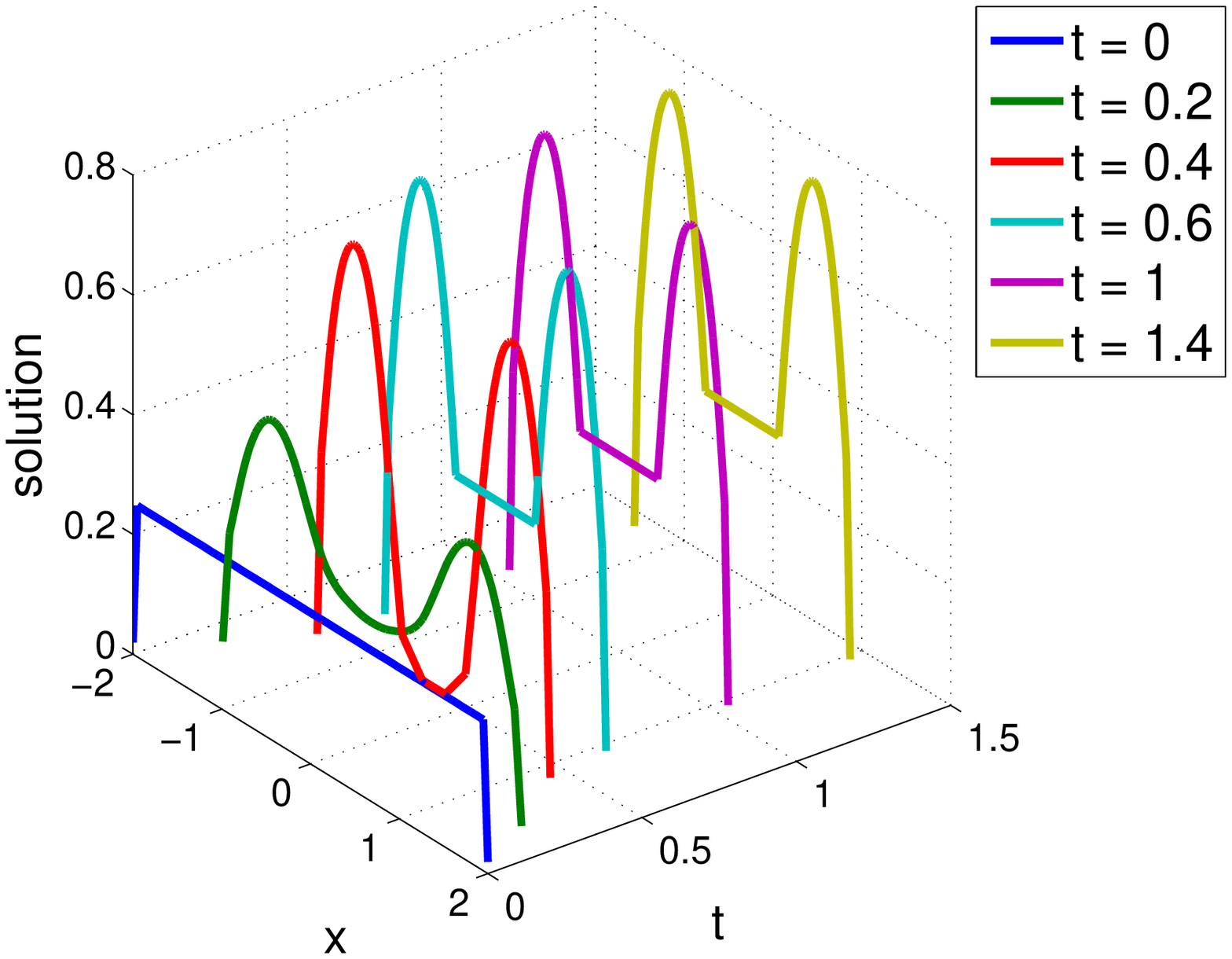}}
	\subcaptionbox{Formation of steady state.\label{fig:compact-evo2}}{\includegraphics[scale=0.41]{./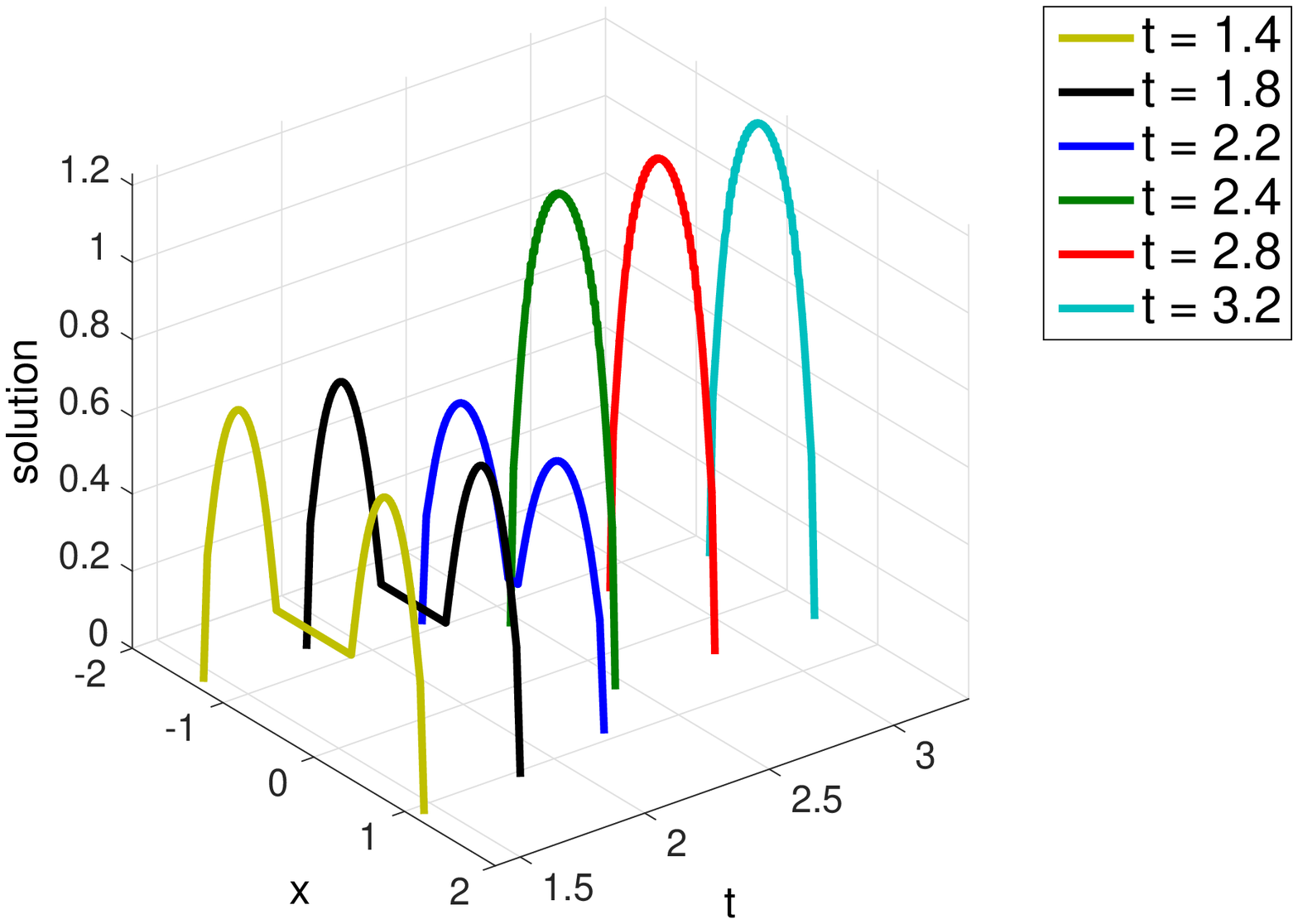}}
	\caption{Compactly supported potential $W(x) = -c\max(1-|x|,0) + c$ with nonlinear diffusion with $c = 8$ and $m = 3$, for $N=80$ with $\Delta t = 10^{-5}$.\label{fig:compact}}
\efig

Figure \ref{fig:compact-evo1} shows the formation of a metastable state made of two bumps, while Figure \ref{fig:compact-evo2} shows how this metastable state breaks into a single-bumped steady state. This behaviour, for exactly this interaction potential (up to a multiplicative constant, $c$), was already noted in \cite[Example 3]{CCH2} when using a finite-volume scheme on gradient flows. The bumps are actually supposed to be disconnected since no particles were found numerically in-between, although this does not seem to be the case in the plots. This is because the particle of each bump which is closest to the origin is not located at the actual boundary of the exact solution's corresponding bump, and so the line connecting the two bumps does not show $0$ density but more. Visually, a better separation of the bumps can be obtained by increasing the number of particles in the simulation.

%================== APPENDIX ================================================
\begin{appendix}

\section{Convergence of the $p$-approximated gradient flow in one dimension}
\label{app:convergence}

In this appendix we give the proof of the convergence of the $p$-approximated gradient flow \eqref{eq:discrete-gradient-flow-ode} to the discrete one \eqref{eq:discrete-gradient-flow-inclusion} in one dimension, see Section \ref{subsec:one-d}. Before doing so we need to recall some notions and results from monotone operator theory.

If $A\: \{y\in X \mid Ay \neq \emptyset\} \subset X\to X^*$, where $X$ is a Banach space and $X^*$ its dual, is a maximal monotone operator, then the \emph{graph} of $A$ is given by $\graph(A)=\{(x,Ax) \mid x \in \{y\in X \mid Ay \neq \emptyset\}\}$.
\begin{defn}[Graph convergence] \label{defn:graph}
	Let $X$ be a reflexive Banach space, $(A_k)_{k\in\N}$ a sequence of maximal monotone operators from $X$ to $X^*$, and $A$ a maximal monotone operator from $X$ to $X^*$. We say that $(A_k)_{k\in\N}$ \emph{graph-converges} to $A$ if, for every $(x,y) \in \graph(A)$, there exists a sequence $((x_k,y_k))_{k\in\N}$ with $(x_k,y_k) \in \graph(A_k)$ for all $k\in\N$ such that $x_k \to x$ strongly in $X$ and $y_k \to y$ strongly in $X^*$ as $k\to\infty$.
\end{defn}
\begin{rem}
	One can check that if $\phi$ is a lower semi-continuous, convex function from a reflexive Banach space to $\R$, then $\p \phi$ is a maximal monotone operator, see \cite[Section 3.7.1]{Attouch}.
\end{rem}

For the sake of completeness we give the defintion of $\Gamma$-convergence.
\begin{defn}[$\Gamma$-convergence] \label{defn:gamma-convergence}
	Let $(X_k)_{k\in\N}$ be a sequence of metric spaces endowed with a distance $d$ and $(\phi_k)_{k\in\N}$ be a sequence of functionals $\phi_k \: X_k \to \R$ for all $k\in\N$. We say that $(\phi_k)_{k\in\N}$ \emph{$\Gamma$-converges} to $\phi$ if the following two conditions are met for all $u \in X$:
\begin{enumerate}[label=(\roman*)]
	\item \label{cond:liminf} (``liminf" condition) $\phi(u) \leq \liminf_{k \to \infty} \phi_k(u_k)$ for all sequences $(u_k)_{k\in\N}$ with $u_k \in X_k$ for all $k\in\N$ and $d(u_k,u) \to 0$ as $k \to \infty$,
	\item \label{cond:limsup} (``limsup" condition) $\limsup_{k \to \infty} \phi_k(u_k) \leq  \phi(u)$ for some sequence $(u_k)_{k\in\N}$ with $u_k \in X_k$ for all $k\in\N$ and $d(u_k,u) \to 0$ as $k \to \infty$.
\end{enumerate}
\end{defn}

Theorem \ref{thm:gammatograph} connects the notion of $\Gamma$-convergence to that of graph-convergence in finite dimension. It is a consequence of \cite[Theorem 3.66]{Attouch} and the fact that $\Gamma$-convergence is equivalent to Mosco convergence in finite dimension. Indeed, in a general dimensional setting, Mosco convergence means that the ``liminf" condition of $\Gamma$-convergence holds for the weak topology and the ``limsup" condition holds for the strong topology, see \cite[Definition 2.2]{Mielke} and \cite[Definition 3.17]{Attouch}.
\begin{thm} \label{thm:gammatograph}
	Let $X$ be a finite-dimensional Banach space. Let $(\phi_k)_{k\in\N}$ be a sequence of lower semi-continuous, convex functions with $\phi_k \: X \to \R$ for all $k\in\N$, and $\phi\: X \to \R$ a lower semi-continuous, convex function. If $(\phi_k)_{k\in\N}$ $\Gamma$-converges to $\phi$, then $(\p \phi_k)_{k\in\N}$ graph-converges to $\p \phi$.
\end{thm}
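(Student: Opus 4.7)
The plan is to reduce the statement to a known result in the literature by exploiting the fact that in finite dimension the weak and strong topologies coincide on sequences. The key observation is that, for a general reflexive Banach space, the implication ``Mosco convergence of lower semi-continuous convex functionals $\Longrightarrow$ graph convergence of their subdifferentials'' is precisely the content of \cite[Theorem 3.66]{Attouch}. The whole issue is therefore to upgrade $\Gamma$-convergence in the sense of Definition \ref{defn:gamma-convergence} to Mosco convergence under the finite-dimensionality assumption on $X$.

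I would proceed in three steps. First, I would recall that Mosco convergence of $(\phi_k)_{k\in\N}$ to $\phi$ means that two conditions hold for every $u \in X$: (i) a weak ``liminf" condition, namely $\phi(u) \leq \liminf_{k\to\infty}\phi_k(u_k)$ for every sequence $(u_k)_{k\in\N}$ with $u_k \rightharpoonup u$ weakly in $X$, and (ii) a strong ``limsup" condition, that is, existence of a recovery sequence $(u_k)_{k\in\N}$ with $u_k \to u$ strongly in $X$ and $\limsup_{k\to\infty}\phi_k(u_k) \leq \phi(u)$.

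Second, I would observe that in the finite-dimensional Banach space $X$ strong and weak convergence of sequences coincide, because all norms on $X$ are equivalent and bounded sequences in finite dimension have strongly convergent subsequences by Bolzano--Weierstrass; hence a sequence $(u_k)_{k\in\N}$ converges weakly to $u$ if and only if it converges strongly to $u$. Consequently, the ``liminf" and ``limsup" conditions of Definition \ref{defn:gamma-convergence} (which are phrased in the single metric topology of $X$) translate verbatim into the Mosco ``liminf" and ``limsup" conditions recalled above, so that the $\Gamma$-convergence hypothesis implies Mosco convergence of $(\phi_k)_{k\in\N}$ to $\phi$.

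Third, having verified Mosco convergence, I would invoke \cite[Theorem 3.66]{Attouch}: since each $\phi_k$ and $\phi$ is lower semi-continuous and convex, their subdifferentials $\partial \phi_k$ and $\partial \phi$ are maximal monotone operators on the reflexive space $X$, and Mosco convergence of the functionals yields graph convergence of the subdifferentials in the sense of Definition \ref{defn:graph}. Combining this with the previous step produces the desired conclusion. The only nontrivial step is the equivalence between $\Gamma$- and Mosco convergence in step two, but this is immediate in finite dimension; the deep content lies in the cited theorem of Attouch, which is invoked as a black box.
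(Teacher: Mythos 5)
Your proposal is correct and follows essentially the same route as the paper: the paper's own justification is precisely that in finite dimension the weak and strong topologies coincide, so $\Gamma$-convergence coincides with Mosco convergence, after which \cite[Theorem 3.66]{Attouch} yields graph convergence of the subdifferentials. No gaps.
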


We can now give the general convergence and regularity result, whose proof can be deduced by \cite[Theorem 3.74]{Attouch}, \cite[Theorem 3.1]{Brezis} and \cite[Theorem 1, Section 3.2]{Aubin}.
\begin{thm} \label{thm:graph}
	Let $X$ be a Hilbert space.  Let $(\phi_k)_{k\in\N}$ be a sequence of lower semi-continuous, convex functions with $\phi_k \: X \to \R$ for all $k\in\N$ and $\phi\: X \to \R$ be a lower semi-continuous, convex function. Suppose that $(\p \phi_k)_{k\in\N}$ graph-converges to $\p \phi$. Consider the following differential inclusions, for all $k\in\N$.
	\bes
		u_k'(t) \in - \p \phi_k(u_k(t)), \quad u_k(0) = u_k^0 \quad \mbox{for almost every $t \in (0,T]$},
	\ees
and
	\bes
		u'(t) \in - \p \phi(u(t)), \quad u(0) = u_0 \quad \mbox{for almost every $t \in (0,T]$},
	\ees
where $u,u_k\:[0,T] \to X$ are the unknown curves. Unique solutions $u$ and $u_k$ exist and
\begin{enumerate}[label=(\arabic*)]
\item \label{it:cont} $u_k$ and $u$ are continuous on $[0,T]$,\\[-0.3cm]
\item \label{it:rc} $u_k'$ and $u'$ are right-continuous on $[0,T]$.
\end{enumerate}
Moreover, assume that $u_k^0 \to u_0$ strongly and that in this case, $\phi_k(u_k^0) \to \phi(u_0)$ as $k\to\infty$. Then
\begin{enumerate}[label=(\arabic*),resume]
\item \label{it:un} $u_k \to u$ uniformly on $[0,T]$ as $k \to \infty$,\\[-0.3cm]
\item \label{it:integral} $\int_0^T t\left|u_k'(t) - u'(t)\right|^2 \d t \to 0$ as $k \to \infty$,\\[-0.3cm]
\item \label{it:dun} $u_k' \to u'$ strongly in $L^2([0,T],X)$ as $k\to\infty$ (so $u_k'(t) \to u'(t)$ for almost every $t \in [0,T]$),\\[-0.3cm]
\item \label{it:phin} $\phi_k(u_k) \to \phi(u)$ uniformly on $[0,T]$ as $k\to\infty$.
\end{enumerate}
\end{thm}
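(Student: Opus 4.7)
The plan is to deduce the statement from classical nonlinear semigroup theory on Hilbert spaces, applied to the maximal monotone operators $A_k := \p \phi_k$ and $A := \p \phi$, which are maximal monotone because $\phi_k$ and $\phi$ are lower semi-continuous and convex. The Brezis--Komura generation theorem yields, for every initial datum $u_k^0 \in X$ (and $u_0 \in X$), a unique absolutely continuous curve satisfying the inclusion, together with the right-continuity of $u_k'$ (resp.\ $u'$) and the pointwise identity $u_k'(t) = -\p^0 \phi_k(u_k(t))$ for a.e.\ $t$; this gives items \ref{it:cont} and \ref{it:rc} at once.

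For the convergence statements, I would first reformulate graph convergence of $(\p\phi_k)$ to $\p\phi$ as strong convergence of the resolvents $J_k^\lambda := (I + \lambda\, \p\phi_k)^{-1}$ to $J^\lambda := (I + \lambda\, \p\phi)^{-1}$ on $X$ for some (equivalently all) $\lambda > 0$; this equivalence is standard for maximal monotone operators on Hilbert spaces, and is particularly clean in finite dimension. Combined with the energy identity
\bes
    \phi_k(u_k(t)) + \int_0^t |u_k'(s)|^2 \, \mathrm{d}s = \phi_k(u_k^0),
\ees
and the assumption $\phi_k(u_k^0) \to \phi(u_0)$, this produces uniform $L^2$ bounds on $(u_k')$ and uniform bounds on $\phi_k(u_k(t))$. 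The contractivity of the semigroups forces $t \mapsto |u_k'(t)|$ to be non-increasing, so the curves $u_k$ are equi-Lipschitz on $[\delta,T]$ for every $\delta>0$. An Arzelà--Ascoli extraction produces a subsequence converging uniformly on $[0,T]$ to some $\tilde u$; the resolvent convergence together with the monotonicity inequality, applied to arbitrary $(x,y) \in \graph(\p\phi)$ and recovery sequences $(x_k,y_k) \in \graph(\p\phi_k)$ coming from graph convergence, identifies $\tilde u$ as the solution $u$, so by uniqueness the whole sequence converges, giving \ref{it:un}.

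The main difficulty lies in upgrading weak to strong convergence of the derivatives, needed for items \ref{it:integral}, \ref{it:dun} and \ref{it:phin}. The key observation is that combining the energy identity at time $T$ with the liminf inequality built into graph convergence gives
\bes
    \limsup_{k \to \infty} \int_0^T |u_k'(s)|^2 \, \mathrm{d}s \leq \phi(u_0) - \liminf_{k \to \infty} \phi_k(u_k(T)) \leq \phi(u_0) - \phi(u(T)) = \int_0^T |u'(s)|^2 \, \mathrm{d}s,
\ees
which, combined with the weak $L^2$ convergence already obtained, forces strong $L^2$ convergence via the Hilbert-space identity $\|v_k - v\|^2 = \|v_k\|^2 - 2\langle v_k, v\rangle + \|v\|^2$; this yields \ref{it:dun}, and hence a.e.\ pointwise convergence of $u_k'$ up to a subsequence. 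The weighted version \ref{it:integral} follows by repeating the argument on $[\delta,T]$ and using the non-increasing nature of $|u_k'(\cdot)|$ to absorb the factor $t$. Finally, \ref{it:phin} is obtained by applying the energy identity pointwise at each $t$: uniform convergence of the integrated kinetic energies converts into uniform convergence of the potential energies $\phi_k(u_k(t)) \to \phi(u(t))$.
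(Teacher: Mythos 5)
Your overall architecture is the classical one: the paper does not prove this result but simply cites \cite[Theorem 3.74]{Attouch}, \cite[Theorem 3.1]{Brezis} and \cite[Theorem 1, Section 3.2]{Aubin}, and your sketch essentially reconstructs those results (Brezis--Komura generation for items (1)--(2), resolvent characterisation of graph convergence, the energy identity, and the now-standard ``weak convergence plus convergence of norms'' upgrade for items (4)--(6), which is correct as you present it). However, two steps have genuine gaps. First, the Arzel\`a--Ascoli extraction is not available in the stated generality: $X$ is an arbitrary Hilbert space, and bounded, equi-Lipschitz curves in an infinite-dimensional $X$ need not have a uniformly (strongly) convergent subsequence, since bounded sets are not relatively compact. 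This is exactly the point where the classical proofs differ: the Brezis--Pazy convergence theorem (pointwise convergence of the resolvents $J_k^\lambda\to J^\lambda$ plus $u_k^0\to u_0$ implies uniform convergence of the trajectories on compact time intervals), or the direct monotonicity/Gronwall estimate on $|u_k(t)-u(t)|$ comparing with resolvent regularisations, avoids compactness altogether; you need one of these arguments (or to restrict to finite dimension, which suffices for the application in the paper but not for the theorem as stated).

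Second, your ``uniform $L^2$ bounds on $(u_k')$ and uniform bounds on $\phi_k(u_k(t))$'' and the ``liminf inequality built into graph convergence'' are asserted rather than derived. Graph convergence of the subdifferentials alone controls the functionals only up to additive constants and gives no liminf inequality; you must first upgrade to Mosco convergence of $\phi_k$ to $\phi$ using the normalisation $\phi_k(u_k^0)\to\phi(u_0)$ (this is Attouch's equivalence theorem, the converse direction of the paper's Theorem \ref{thm:gammatograph}). Even then, the lower bound on $\phi_k(u_k(t))$ uniform in $k$ and $t$ requires an argument: pick $(x,y)\in\graph(\p\phi)$ with a recovery sequence $(x_k,y_k)\in\graph(\p\phi_k)$, use monotonicity against $u_k'\in-\p\phi_k(u_k)$ to get $\frac{\der}{\der t}|u_k(t)-x_k|\leq|y_k|$ and hence a uniform bound on the trajectories, and then the affine minorants $\phi_k(\cdot)\geq\phi_k(x_k)+\langle y_k,\cdot-x_k\rangle$ together with the normalisation to pin $\phi_k(x_k)$ from below. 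These are fillable but essential steps; without them the energy identity does not yield the claimed bounds, and the subsequent limsup inequality for $\int_0^T|u_k'|^2$ (which is otherwise the right idea, and does give (4)--(6) once (3) is secured) has no starting point.
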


It is not hard to see that $\wtE^p$, see \eqref{eq:energy-p-one-d}, $\Gamma$-converges to $\wtE$, see \eqref{eq:energy-discrete-particles}, as $p\to\infty$. Furthermore, it is easily verified that $\wtE^p(\boldsymbol{x^0}) \to \wtE(\boldsymbol{x^0})$ as $p\to\infty$, where $\boldsymbol{x^0} \in \R_w^N$ is taken here to be the initial condition for both the discrete and $p$-approximated discrete gradient flows \eqref{eq:discrete-gradient-flow-inclusion} and \eqref{eq:discrete-gradient-flow-ode}. Therefore, in order to use Theorems \ref{thm:gammatograph} and \ref{thm:graph} combined, we are only left with checking that $\wtE^p$ and $\wtE$ are lower semi-continuous and convex. The first condition is trivial to verify based on the assumptions on $H,V$ and $W$, whereas the convexity condition is shown below whenever $V$ and $W$ are assumed to be convex. Actually the following proposition shows the convexity of $\wtE$ only, but also holds for $\wtE^p$ since the proof is easily adapted from the classical minimum function to the $p$-approximated one.
\begin{prop} \label{prop:convexity-EN}
	Let $d=1$ and the confinement and interaction potentials $V$ and $W$ be convex. Then the discrete energy $\wtE$ defined in \eqref{eq:energy-discrete-particles} is convex.
\end{prop}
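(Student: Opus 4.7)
The strategy will be to split $\wtE$ into its three natural pieces—internal, confinement, and interaction—and verify convexity of each separately on the convex chamber $\{x_1<\cdots<x_N\}\subset \R^N$, which is the effective domain by the ordering convention of Remark~\ref{rem:convention}.

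For the confinement and interaction pieces the argument will be essentially automatic. Each $V(x_i)$ depends on only one coordinate of $\bx$ and is convex since $V$ is; multiplying by $w_i>0$ and summing preserves convexity. For the interaction piece, the map $\bx\mapsto x_i-x_j$ is affine and $W$ is convex by assumption, so each composition $W(x_i-x_j)$ is convex in $\bx$, and the nonnegative weights $w_iw_j$ keep the sum convex.

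The heart of the proof lies in the internal term $\sum_{i=1}^N r_i\,H(w_i/r_i)$. The idea is to rewrite it using the auxiliary function $h(\lambda)=\lambda H(1/\lambda)$ supplied by Hypothesis~\ref{hyp:basic}: a direct manipulation gives the identity
\[
 r_i\,H\!\left(\frac{w_i}{r_i}\right) \;=\; w_i\,h\!\left(\frac{r_i}{w_i}\right).
\]
Now $r_i=\min(\Delta x_i,\Delta x_{i+1})$ is the pointwise minimum of two affine functions of $\bx$ (with the boundary conventions $\Delta x_1=\Delta x_{N+1}=+\infty$ reducing $r_1$ and $r_N$ to a single affine expression), hence $r_i$, and therefore $r_i/w_i$, is concave in $\bx$. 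Since by Hypothesis~\ref{hyp:basic} the function $h$ is convex and non-increasing on $(0,\infty)$, the standard fact that a non-increasing convex function composed with a concave function is again convex yields that $\bx\mapsto h(r_i/w_i)$ is convex. Multiplying by $w_i>0$ and summing over $i$ gives convexity of the internal part, and hence of $\wtE$.

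The only real conceptual point—and the reason Hypothesis~\ref{hyp:basic} was formulated via the auxiliary $h$ rather than via $H$ alone—is precisely this composition step: working directly with $H$ would force one to compose a convex function with $w_i/r_i$, which is convex (not concave) in $r_i$, and convexity would fail in general. Reformulating through $h$ aligns the monotonicity of the outer function with the concavity of the inner, and no further ingredient is required. The same argument will adapt verbatim to $\wtE^p$ once one notes that $r_i^p=\minp_p(\Delta x_i,\Delta x_{i+1})$ is likewise concave in $\bx$, since $\minp_p$ (a negative-power mean) is concave on $(0,\infty)^s$.
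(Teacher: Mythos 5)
Your proof is correct and takes essentially the same route as the paper's: both rest on rewriting the internal term via the identity $r_i H(w_i/r_i) = w_i h(r_i/w_i)$ and composing the convex, non-increasing $h$ with the concave function $\min(\Delta x_i,\Delta x_{i+1})$, while the $V$ and $W$ terms follow from convexity composed with affine maps. The only difference is presentational—the paper spells out the $\lambda$-combination explicitly instead of quoting the standard composition rules—so nothing further is needed.
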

\begin{proof}
Let $\lambda \in [0,1]$ and $\bx,\by \in \R_w^N$. Then, by the facts that $\min$ is concave on $\R^2$ and $h$ is non-increasing and convex on $(0,\infty)$, we know that $h\circ\min$ is convex on $(0,\infty)^2$, where $\circ$ is the composition operator. Define, for all $a,b\in\R$, $[a,b]_\lambda = \lambda a + (1-\lambda)b$, and $r_i(\bx) = \min(\Delta x_i,\Delta x_{i+1})$ and $r_i(\by) = \min(\Delta y_i,\Delta y_{i+1})$. Therefore, since $V$ and $W$ are convex,
\begin{align*}
	\wt{E}_N([\bx,\by]_\lambda) &= \sum_{i = 1}^N w_i h\left(\frac{1}{w_i}\min( [\Delta x_i,\Delta y_i]_\lambda,
[\Delta x_{i+1},\Delta y_{i+1}]_\lambda)
\right)\\
	&\phantom{={}} + \sum_{i=1}^N w_i V([x_i,y_i]_\lambda) + \dfrac{1}{2} \sum_{i=1}^N \sum_{\substack{j=1\\j\neq i}}^N w_i w_j W([x_i-x_j,y_i-y_j]_\lambda)\\
	 &\leq \lambda \sum_{i = 1}^N w_i h\left(\frac{r_i(\bx)}{w_i}\right) + (1 - \lambda) \sum_{i = 1}^N w_i h\left(\frac{r_i(\by)}{w_i}\right) + \lambda \sum_{i=1}^N w_i V(x_i) + (1-\lambda)\sum_{i=1}^N w_i V(y_i)\\
	&\phantom{={}}+ \dfrac{\lambda}{2} \sum_{i=1}^N \sum_{\substack{j=1\\j\neq i}}^N w_i w_j W(x_i - x_j) + \frac{1-\lambda}{2} \sum_{i=1}^N \sum_{\substack{j=1\\j\neq i}}^N w_i w_j W(y_i - y_j)\\
	&= \lambda\wt{E}_N(\bx) + (1-\lambda)\wt{E}_N(\by).
\end{align*}
Hence convexity of $\wt{E}_N$.
\end{proof}
Theorem \ref{thm:gammatograph} now tells us that $\p_w\wtE^p$ graph-converges to $\p_w\wtE$ as $p\to\infty$, and Theorem \ref{thm:graph} tells us in which sense the $p$-approximated discrete gradient flow converges to the discrete one and also gives us some regularity on the discrete and $p$-approximated discrete gradient flow solutions. The use of the $p$-approximated gradient flow \eqref{eq:discrete-gradient-flow-ode} is therefore justified to approximate \eqref{eq:discrete-gradient-flow-inclusion} (at least in the case when $V$ and $W$ are convex and $d=1$).

\end{appendix}

%================== ACKNOWLEDGEMENTS =======================================
\subsection*{Acknowledgements}

JAC, YH and FSP are supported by Engineering and Physical Sciences Research Council grant EP/K008404/1. JAC is also supported by the Royal Society through a Wolfson Research Merit Award. GW is supported by ISF grant 998/5.

%================== BIBLIOGRAPHY ==============================================
\bibliography{jko_scheme}
\bibliographystyle{abbrv}

%================== END DOCUMENT ============================================
\end{document}